\newcommand{\subfiguretitle}[1]{{\scriptsize{#1}} \\}
\newcommand{\R}{\mathbb{R}}                                     
\newcommand{\pd}[2]{\frac{\partial#1}{\partial#2}}              
\newcommand{\innerprod}[2]{\left\langle #1,\, #2 \right\rangle} 
\newcommand{\ts}{\hspace*{0.1em}}                               
\newcommand{\relmiddle}[1]{\mathrel{}\middle#1\mathrel{}}       
\providecommand{\norm}[1]{\left\lVert #1 \right\rVert}          
\newcommand\xqed[1]{\leavevmode\unskip\penalty9999 \hbox{}\nobreak\hfill \quad\hbox{#1}}
\newcommand{\exampleSymbol}{\xqed{$\triangle$}}
\let\Im\relax\DeclareMathOperator{\Im}{Im}
\newtheorem{theorem}{Theorem}[section]
\newtheorem{proposition}[theorem]{Proposition}
\theoremstyle{definition}
\newtheorem{example}[theorem]{Example}
\newtheorem{remark}[theorem]{Remark}
\renewcommand*\env@matrix[1][*\c@MaxMatrixCols c]{%
  \hskip -\arraycolsep
  \let\@ifnextchar\new@ifnextchar
  \array{#1}}
\title{Data-driven approximation of the \\ Koopman generator: Model reduction, system identification, and control}
\author[1]{Stefan Klus}
\author[2,3]{Feliks N\"uske}
\author[3]{Sebastian Peitz}
\author[4]{\\Jan-Hendrik Niemann}
\author[2]{Cecilia Clementi}
\author[1,4]{Christof Sch\"utte}
\affil[1]{Department of Mathematics and Computer Science, Freie Universit\"at Berlin, Germany}
\affil[2]{Center for Theoretical Biological Physics and Department of Chemistry, Rice University, USA}
\affil[3]{Department of Mathematics, Paderborn University, Germany}
\affil[4]{Zuse Institute Berlin, Germany}
\date{}
\begin{document}
\maketitle

\begin{abstract}
We derive a data-driven method for the approximation of the Koopman generator called gEDMD, which can be regarded as a straightforward extension of EDMD (\emph{extended dynamic mode decomposition}). This approach is applicable to deterministic and stochastic dynamical systems. It can be used for computing eigenvalues, eigenfunctions, and modes of the generator and for system identification. In addition to learning the governing equations of deterministic systems, which then reduces to SINDy (\emph{sparse identification of nonlinear dynamics}), it is possible to identify the drift and diffusion terms of stochastic differential equations from data. Moreover, we apply gEDMD to derive coarse-grained models of high-dimensional systems, and also to determine efficient model predictive control strategies. We highlight relationships with other methods and demonstrate the efficacy of the proposed methods using several guiding examples and prototypical molecular dynamics problems.
\end{abstract}

\section{Introduction}

Data-driven approaches for the analysis of complex dynamical systems---be it methods to approximate transfer operators for computing metastable or coherent sets, methods to learn physical laws, or methods for optimization and control---have been steadily gaining popularity over the last years. Algorithms such as DMD~\cite{Schmid10, TRLBK14}, EDMD~\cite{WRK15, KKS16}, SINDy~\cite{BrPrKu16}, and their various kernel- \cite{WRK15, SP15, KSM19}, tensor- \cite{KGPS18, GKES19, CSBR19}, or neural network-based \cite{LDBK17, LKB17, MPWN18} extensions and generalizations have been successfully applied to a plethora of different problems, including molecular and fluid dynamics, meteorology, finance, as well as mechanical and electrical engineering. An overview of different applications can be found, e.g., in \cite{KBBP16}. Similar methods, developed mainly for reversible molecular dynamics problems, have been proposed in \cite{NKPMN14}. Most of the aforementioned techniques turn out to be strongly related, with the unifying concept being Koopman operator theory \cite{Ko31, LaMa94, BMM12}. In what follows, we will focus mainly on the generator of the Koopman operator and its properties and applications.

SINDy~\cite{BrPrKu16} constitutes a milestone for data-driven discovery of dynamical systems. Because of the close relationship between the vector field of a deterministic dynamical system and its Koopman generator, SINDy is a special case of the framework we will introduce in this study. In \cite{Kaiser17, Kaiser18}, an extension of SINDy to determine eigenfunctions of the Koopman generator was presented. The discovered eigenfunctions are then used for control, resulting in the so-called KRONIC framework. Another extension of SINDy was derived in \cite{BNC18}, allowing for the identification of parameters of a stochastic system using Kramers--Moyal formulae.

A different avenue towards system identification was taken in~\cite{MauGon16, MauGon17}. Here, the Koopman operator is first approximated with the aid of EDMD, and then its generator is determined using the matrix logarithm. Subsequently, the right-hand side of the differential equation is extracted from the matrix representation of the generator. The relationship between the Koopman operator and its generator was also exploited in \cite{RiTK17} for parameter estimation of stochastic differential equations.

A method for computing eigenfunctions of the Koopman generator was proposed in~\cite{Gia19}, where the diffusion maps algorithm is used to set up a Galerkin-projected eigenvalue problem with orthogonal basis elements. Two efficient methods for computing the generator of the adjoint Perron--Frobenius operator based on Ulam's method and spectral collocation were presented in \cite{FrJuKo13}. Provided that a model of the system dynamics is available, the computation of trajectories can be replaced by evaluations of the right-hand side of the system, which is often orders of magnitude faster.

The purpose of this study is to present a general framework to compute a matrix approximation of the Koopman generator, both for deterministic and stochastic systems, and to explore a range of applications. The main contributions of this work are:
\begin{enumerate}[wide, itemindent=\parindent, itemsep=0ex, topsep=0.5ex]
\item We reformulate standard EDMD in such a way that it can be used to approximate the generator of the Koopman operator---as well as its eigenvalues, eigenfunctions, and modes---from data without resorting to trajectory integration. Exploiting duality, this can be extended naturally to the generator of the Perron--Frobenius operator.
\item We illustrate that the governing equations of deterministic as well as stochastic dynamical systems can be obtained from empirical estimates of the generator. Furthermore, we highlight relationships with related system identification techniques such as the Koopman lifting approach \cite{MauGon16}, SINDy \cite{BrPrKu16}, and KRONIC \cite{Kaiser17}, which focus mainly on identifying ordinary differential equations.
\item Lastly, we explore two powerful applications of the approximated Koopman generator. We show that gEDMD can be used to identify coarse-grained models based on data of the full system, which is a highly relevant topic across different research fields, like molecular dynamics simulations for instance. Moreover, we apply the Koopman generator to control dynamical systems, providing flexible and efficient model predictive control strategies.
\end{enumerate}

The efficacy of the resulting methods will be demonstrated with the aid of guiding examples and illustrative benchmark problems.

The remainder of this paper is structured as follows: In Section~\ref{sec:Koopman operator}, we introduce the Koopman operator and its generator for different kinds of dynamical systems. We then derive an extension of EDMD for the approximation of the Koopman generator, named gEDMD, in Section~\ref{sec:gEDMD}. Furthermore, relationships with other methods are described. Section~\ref{sec:Further applications} explores additional applications of the proposed methods, namely coarse-graining and the application to control problems. Open questions and future work are discussed in Section~\ref{sec:Conclusion}.

\section{The Koopman operator and its generator}
\label{sec:Koopman operator}

In what follows, let $ \mathbb{X} $ be the state space, e.g., $ \mathbb{X} \subset \R^d $, and $ f \in L^{\infty}(\mathbb{X}) $ a real-valued observable of the system.

\subsection{Deterministic dynamical systems}

Given an ordinary differential equation of the form $ \dot{x} = b(x) $, where $ b \colon \R^d \to \R^d $, the so-called \emph{Koopman semigroup} of operators $ \{\ts \mathcal{K}^t \ts\} $ is defined as
\begin{equation*}
    (\mathcal{K}^t f)(x) = f(\Phi^t(x)),
\end{equation*}
where $ \Phi^t $ is the flow map, see \cite{LaMa94, BMM12, KKS16}. That is, if $ x(t) $ is a solution of the initial value problem with initial condition $ x(0) = x_0 $, then $ \Phi^t(x_0) = x(t) $. The infinitesimal generator $ \mathcal{L} $ of the semigroup, defined as
\begin{equation*}
    \mathcal{L} f = \lim_{t \rightarrow 0} \frac{1}{t} \left(\mathcal{K}^t f - f \right),
\end{equation*}
is given by
\begin{equation*}
    \mathcal{L} f = \frac{\mathrm{d}}{\mathrm{d}t} f
                  = b \cdot \nabla_x f
                  = \sum_{i=1}^d b_i \ts \pd{f}{x_i},
\end{equation*}
see, e.g., \cite{LaMa94}. Thus, if $ f $ is continuously differentiable, then $ u(t, x) = \mathcal{K}^t f(x) $ satisfies the first-order partial differential equation $ \pd{u}{t} = \mathcal{L} u $. The adjoint operator $ \mathcal{L}^* $, i.e., the generator of the Perron--Frobenius operator, is given by
\begin{equation*}
    \mathcal{L}^* f = -\sum_{i=1}^d \pd{(b_i \ts f)}{x_i}.
\end{equation*}

\begin{example} \label{ex:Simple example}
Throughout the paper, we will use the simple system
\begin{equation*}
    \begin{split}
        \dot{x}_1 &= \gamma \ts x_1, \\
        \dot{x}_2 &= \delta \ts (x_2 - x_1^2),
    \end{split}
\end{equation*}
taken from~\cite{BBPK16}, as a guiding example. In addition to the trivial eigenfunction $ \varphi_1(x) = 1 $ with corresponding generator eigenvalue $ \lambda_1 = 0 $, we obtain $ \varphi_2(x) = x_1 $ and $ \varphi_3(x) = \frac{2 \gamma - \delta}{\delta} x_2 + x_1^2 $ with corresponding generator eigenvalues $ \lambda_2 = \gamma $ and $ \lambda_3 = \delta $, respectively. Moreover, products of eigenfunctions are again eigenfunctions. \exampleSymbol
\end{example}

\subsection{Non-deterministic dynamical systems}

Similarly, the definition of the Koopman operator can be generalized to stochastic differential equations
\begin{equation} \label{eq:SDE}
    \mathrm{d}X_t = b(X_t) \ts \mathrm{d}t + \sigma(X_t) \ts \mathrm{d}W_t
\end{equation}
as described, e.g., in \cite{Hol08}, resulting in
\begin{equation} \label{eq:stochastic_koopman_op}
    (\mathcal{K}^t f)(x) = \mathbb{E}[f(\Phi^t(x))].
\end{equation}
Here, $ \mathbb{E}[\,\cdot\,] $ denotes the expected value, $ b \colon \R^d \to \R^d $ is the drift term, $ \sigma \colon \R^d \to \R^{d \times s} $ the diffusion term, and $ W_t $ an $ s $-dimensional Wiener process. Given a twice continuously differentiable function $ f $, it can be shown using It\^{o}'s lemma that  the infinitesimal generator of the stochastic Koopman operator is then characterized by
\begin{equation} \label{eq:generator_SDE}
    \mathcal{L} f
    = b \cdot \nabla_x f + \frac{1}{2} a : \nabla_x^2 f
    = \sum_{i=1}^d b_i \ts \pd{f}{x_i} + \frac{1}{2} \sum_{i=1}^d \sum_{j=1}^d a_{ij} \ts \pd{^2 f}{x_i \ts \partial x_j},
\end{equation}
where $ a = \sigma \ts \sigma^\top $ and $ \nabla_x^2 $ denotes the Hessian. Properties of the generator associated with non-deterministic dynamical systems are studied in \cite{CMM19}. The function $ u(t, x) = \mathcal{K}^t f(x) $ satisfies the second-order partial differential equation $ \pd{u}{t} = \mathcal{L} u $, which is called the \emph{Kolmogorov backward equation} \cite{Met07}. The adjoint operator in this case is
\begin{equation*}
    \mathcal{L}^* f = -\sum_{i=1}^d \pd{(b_i \ts f)}{x_i}  + \frac{1}{2} \sum_{i=1}^d \sum_{j=1}^d \pd{^2 (a_{ij} \ts f)}{x_i \ts \partial x_j}
\end{equation*}
so that $ \pd{u}{t} = \mathcal{L}^* u $ becomes the \emph{Fokker--Planck equation} or \emph{Kolmogorov forward equation}~\cite{LaMa94}.

If $\mu$ is a stationary measure for the process $X_t$, the Koopman operator can be extended from $L^\infty_\mu(\mathbb{X})$ to the Hilbert space $L^2_\mu(\mathbb{X})$ with inner product $\innerprod{f}{g}_\mu = \int_\mathbb{X} f(x) \ts g(x) \ts \mathrm{d}\mu(x)$ \cite{BAXTER1995}. We will frequently consider this situation in what follows. An important class of stochastic differential equations are those which are reversible with respect to a measure $\mu$, which is necessarily a stationary measure in this case. The Koopman operator becomes self-adjoint on $L^2_\mu(\mathbb{X})$ in the reversible setting. Reversible systems can be characterized by the diffusion $\sigma$ and a scalar potential $F \colon \R^d \to \R$, from which the drift is then obtained by
\begin{equation*}
b = -\frac{1}{2} a \ts \nabla F + \frac{1}{2} \nabla \cdot a,
\end{equation*}
where the divergence in the second term is applied to each column of $a$ \cite{Pav14}. The generator of a reversible stochastic differential equation is a self-adjoint and typically unbounded operator on a suitable dense subspace of $L^2_\mu(\mathbb{X})$.

\begin{remark} \label{rem:overdamped_langevin}
For systems of the form $ \mathrm{d}X_t = -\nabla V(X_t) \ts \mathrm{d}t + \sqrt{2 \beta^{-1}} \ts \mathrm{d}W_t $, which play an important role in molecular dynamics, we obtain
\begin{equation*}
    \mathcal{L} f =  -\nabla V \cdot \nabla f + \beta^{-1} \Delta f
    \quad \text{and} \quad
    \mathcal{L}^* f =  \nabla V \cdot \nabla f + \Delta V \ts f + \beta^{-1} \Delta f.
\end{equation*}
Here, $ V $ describes the potential and $ \beta $ is the inverse temperature. The resulting dynamics are reversible with invariant measure $\mu(x) \sim \exp(-\beta \ts V(x))$. The generator $\mathcal{L}$ is self-adjoint on $L^2_\mu(\mathbb{X})$ and it can be shown that, assuming suitable growth conditions on the potential, the spectrum of $\mathcal{L}$ is discrete \cite{BAKRY2013}.
\end{remark}

\begin{example} \label{ex:Ornstein Uhlenbeck}
We will use the one-dimensional Ornstein--Uhlenbeck process, given by the stochastic differential equation
\begin{equation*}
    \mathrm{d}X_t = -\alpha \ts X_t \ts \mathrm{d}t + \sqrt{2 \beta^{-1}} \ts \mathrm{d}W_t,
\end{equation*}
which is of the above form with $ V(x) = \frac{1}{2} \ts \alpha \ts x^2 $, as a second guiding example. The parameter $ \alpha $ is the friction coefficient. The generator becomes self-adjoint in the space $ L^2(\rho) $ weighted by the invariant density
\begin{equation*}
    \rho(x) = \frac{1}{\sqrt{2 \pi \alpha^{-1} \beta^{-1}}} \exp\left(-\alpha \ts \beta \ts \frac{x^2}{2}\right)
\end{equation*}
and the eigenvalues $ \lambda_\ell $ and eigenfunctions $ \varphi_\ell $ are given by
\begin{equation*}
    \lambda_\ell = -\alpha \ts (\ell-1),
    \quad
    \varphi_\ell(x) = \frac{1}{\sqrt{(\ell-1)!}} \ts H_{\ell-1}\left(\sqrt{\alpha \beta} \ts x\right),
    \quad
    \ell = 1, 2, \dots,
\end{equation*}
where $ H_\ell $ denotes the $ \ell $th probabilists' Hermite polynomial \cite{Pav14}. That these functions are indeed eigenfunctions can be verified easily using recurrence relations for the Hermite polynomials, i.e., $ H_{\ell+1}(x) = x H_\ell(x) - H_\ell^\prime(x) $. \exampleSymbol
\end{example}

\subsection{Galerkin approximation}

Given a set of basis functions $ \{\ts \psi_i \ts\}_{i=1}^n $, where $ \psi_i \colon \R^d \to \R $, a Galerkin approximation $ \mathbf{L} $ of the generator $ \mathcal{L} $ can be obtained by computing the matrices $ A, G \in \R^{n \times n} $ with
\begin{equation} \label{eq:Galerkin_matrices}
    \begin{split}
        A_{ij} &= \innerprod{\mathcal{L} \ts \psi_i}{\psi_j}_\mu, \\
        G_{ij} &= \innerprod{\psi_i}{\psi_j}_\mu,
    \end{split}
\end{equation}
where $ \mu $ is a given measure. The matrix representation $ L $ of the projected operator $ \mathbf{L} $ is then given by $ L^\top = A \ts G^{-1} $. We define $ \psi(x) = [\psi_1(x), \dots, \psi_n(x)]^\top $. That is, for a function $ f(x) = \sum_{i=1}^n c_i \ts \psi_i(x) = c^\top \psi(x) $, it holds that $ (\mathbf{L} f)(x) = (L \ts c)^\top \psi(x) $, where $ c = [c_1, \dots, c_n]^\top \in \R^n $. It follows that an eigenvector $ \xi_\ell $ of $ L $ corresponding to the eigenvalue $ \lambda_\ell $ contains the coefficients for the eigenfunctions of $ \mathbf{L} $ since defining $ \varphi_\ell(x) = \xi_\ell^\top \psi(x) $ yields
\begin{equation*}
    (\mathbf{L} \varphi_\ell)(x) = (L \ts \xi_\ell)^\top \psi(x) = \lambda_\ell \ts \xi_\ell^\top \psi(x) = \lambda_\ell \ts \varphi_\ell(x).
\end{equation*}
In many applications, the reciprocals of the generator eigenvalues (or their approximations) are also of interest, as they can be interpreted as decay time scales of dynamical processes in the system. We will refer to them as \textit{implied time scales}
\begin{equation*}
    t_\ell := \frac{1}{\lambda_\ell}.
\end{equation*}

\begin{example} \label{ex:OrnUhl1}
For the Ornstein--Uhlenbeck process and a basis comprising monomials of order up to $ n - 1 $, i.e., $ \psi(x) = [1, x, \dots, x^{n-1}]^\top $, we can compute the matrix $ L $ analytically. Note that $ \mathcal{L} \psi_k $ is again in the subspace spanned by $ \{\ts \psi_i \ts\}_{i=1}^n $. In particular, for $ k \ge 3 $, we have
\begin{equation*}
    (\mathcal{L} \psi_k)(x) = -\alpha \ts (k-1) \ts x^{k-1} + \beta^{-1} (k-1)(k-2) \ts x^{k-3}
\end{equation*}
and the matrix $ L \in \R^{n \times n} $ is of the form
\begin{equation*}
    \renewcommand{\tabcolsep}{15cm}
    \kbordermatrix{
        & 1 & x & x^2 & x^3 & x^4 & x^5 & x^6 & \dots \\
       1 & 0 & & 2\ts\beta^{-1} & & & & & \\
       x & & -\alpha & & 6\ts\beta^{-1} & & & & \\
       x^2 & & & -2\ts\alpha & & 12\ts\beta^{-1} & & & \\
       x^3 & & & & -3\ts\alpha & & 20\ts\beta^{-1} & & \\
       x^4 & & & & & -4\ts\alpha & & 30\ts\beta^{-1} & \\[-0.9ex]
       x^5 & & & & & & -5\ts\alpha & & \ddots \\
       x^6 & & & & & & & -6\ts\alpha & \\
       \vdots & & & & & & & & \ddots
    },
\end{equation*}
where the row and column labels correspond to the respective basis functions. The eigenvalues of the generator are given by $ \lambda_\ell = -\alpha \ts (\ell-1) $, for $ \ell = 1, \dots, n $, and the resulting eigenfunctions whose coefficients are given by the eigenvectors are the (transformed) probabilists' Hermite polynomials as described above. An approach to compute Hermite polynomials by solving an eigenvalue problem, resulting in a similar matrix representation, is also described in~\cite{Aboites17}. \exampleSymbol
\end{example}

Since we in general cannot compute the required integrals analytically, the aim is to estimate them from data using, e.g., Monte Carlo integration. More details regarding different types of Galerkin approximations and other methods for the approximation of transfer operators from data can be found in \cite{KKS16, KNKWKSN18}.

\begin{remark}
Issues pertaining to non-compactness or continuous spectra of Koopman operators associated with systems of high complexity are beyond the scope of this paper. Although such cases can theoretically be handled, the numerical analysis is often challenging and typically requires regularization, which is, for instance, implicitly given by Galerkin projections \cite{Gia19}. This is discussed in detail in the aforecited work by Giannakis. Moreover, the projected generator does in general not result in a rate matrix, see \cite{SS13, SS15} for details on Galerkin discretizations of transfer operators and their properties.
\end{remark}

\section{Infinitesimal generator EDMD}
\label{sec:gEDMD}

EDMD \cite{WKR15, KKS16} was developed for the approximation of the Koopman or Perron--Frobenius operator from data. However, it can be reformulated to compute also the associated infinitesimal generators. We will call the resulting method gEDMD.

\subsection{Deterministic dynamical systems}

Let us first consider the deterministic case, which---albeit derived in another way and with different applications in mind---has already been studied in \cite{Kaiser17, Kaiser18} so that we only briefly summarize and extend these results and then generalize them to the non-deterministic setting. Detailed relationships with other methods can be found in Section~\ref{ssec:Relationships with other methods}. We now assume that we have $ m $ measurements of the states of the system, given by $ \{\ts x_l \ts\}_{l=1}^m $, and the corresponding time derivatives, given by $ \{\ts \dot{x}_l \ts\}_{l=1}^m $. The derivatives might also be estimated from data, cf.~\cite{BrPrKu16}.

\subsubsection{Generator approximation}

Similar to the Galerkin projection described above, we then choose a set of basis functions, also sometimes called \emph{dictionary}, defined by $ \{\ts \psi_i \ts\}_{i=1}^n $, and write this again in vector form as $ \psi(x) = [\psi_1(x), \dots, \psi_n(x)]^\top $. Additionally, we define
\begin{equation*}
    \dot{\psi}_k(x) = (\mathcal{L} \psi_k)(x) = \sum_{i=1}^d b_i(x) \ts \pd{\psi_k}{x_i}(x).
\end{equation*}
For all data points and basis functions, this can be written in matrix form as
\begin{equation*}
    \Psi_X =
    \begin{bmatrix}
        \psi_1(x_1) & \dots  & \psi_1(x_m) \\
        \vdots      & \ddots & \vdots      \\
        \psi_n(x_1) & \dots  & \psi_n(x_m)
    \end{bmatrix}
    \quad \text{and} \quad
    \dot{\Psi}_X =
    \begin{bmatrix}
        \dot{\psi}_1(x_1) & \dots  & \dot{\psi}_1(x_m) \\
        \vdots            & \ddots & \vdots            \\
        \dot{\psi}_n(x_1) & \dots  & \dot{\psi}_n(x_m)
    \end{bmatrix},
\end{equation*}
where $ \Psi_X, \dot{\Psi}_X \in \R^{n \times m} $. The partial derivatives of the basis functions required for $ \dot{\psi}_k(x_l) $ can be precomputed analytically.\!\footnote{Alternatively, automatic differentiation or symbolic computing toolboxes could be utilized.} Note that we additionally need $ b(x_l) $ which is simply~$ \dot{x}_l $. If the time derivatives cannot be measured directly, they can be approximated using, e.g., finite differences. We now assume there exists a matrix $ M $ such that $ \dot{\Psi}_X = M \Psi_X $. Since this equation in general cannot be satisfied exactly, we solve it in the least squares sense---analogously to the derivation of EDMD---by minimizing $ \norm{\smash{\dot{\Psi}_X - M \Psi_X}}_F $, resulting in
\begin{equation*}
    M = \dot{\Psi}_X \Psi_X^+ = \big(\dot{\Psi}_X \Psi_X^\top\big) \big(\Psi_X \Psi_X^\top\big)^+ = \widehat{A} \ts \widehat{G}^+,
\end{equation*}
with
\begin{equation*}
    \widehat{A} = \frac{1}{m} \sum_{l=1}^m \dot{\psi}(x_l) \ts \psi(x_l)^\top
    \quad \text{and} \quad
    \widehat{G} = \frac{1}{m} \sum_{l=1}^m \psi(x_l) \ts \psi(x_l)^\top.
\end{equation*}
We call this approach gEDMD. The advantage is that the generator might be sparse even when the Koopman operator for the time-$ t $ map is not.

\begin{remark}
The sparsification approach proposed for SINDy, see \cite{BrPrKu16}, can be added in the same way to gEDMD in order to minimize the number of spurious nonzero entries caused, for instance, by the numerical approximation of the time derivatives or by noisy data.
\end{remark}

The convergence to the Galerkin approximation in the infinite data limit will be shown for the non-deterministic case, the deterministic counterpart follows as a special case. The matrix $ M $ is thus an empirical estimate of $ L^\top $ and we write $ M = \widehat{L}^\top = \widehat{A} \ts \widehat{G}^+ $. Accordingly, exploiting duality, the matrix representation of the adjoint operator $ \mathcal{L}^* $, the generator of the Perron--Frobenius operator, is given by $ M^* = (\widehat{L}^*)^\top = \widehat{A}\ts^\top \widehat{G}^{+} $. A detailed derivation for standard EDMD, which can be carried over to gEDMD, can be found in \cite{KKS16}. The convergence of the standard EDMD approximation to the Koopman operator as the number of basis functions goes to infinity is discussed in \cite{KoMe18}. Whether the results can be extended to gEDMD will be studied in future work.

\begin{example} \label{ex:Simple example generator}
Let us again consider the system defined in Example~\ref{ex:Simple example} using monomials up to order $ 8 $. We set $ \gamma = -0.8 $ and $ \delta = -0.7 $ and generate $ 1000 $ uniformly distributed test points in $ [-2, 2] \times [-2, 2] $. Then gEDMD results in eigenvalues and (rescaled) eigenfunctions
\begin{equation*}
    \arraycolsep=10pt
    \def\arraystretch{1.3}
    \begin{array}{ll}
        \lambda_1 \approx 0,
            & \varphi_1(x) \approx 1, \\
        \lambda_2 \approx -0.7 = \delta,
            & \varphi_2(x) = 1.286 \ts x_2 + 1.000 \ts x_1^2 \approx \frac{2\gamma - \delta}{\delta} x_2 + x_1^2, \\
        \lambda_3 \approx -0.8 = \gamma,
            & \varphi_3(x) \approx x_1.
    \end{array}
\end{equation*}
The subsequent eigenfunctions are products of the above eigenfunctions, we obtain, for instance, $ \lambda_6 \approx -1.6 = 2 \ts \gamma $ with $ \varphi_6(x) = 1.000 \ts x_1^2 \approx \varphi_3(x)^2 $. Note that the ordering of the eigenvalues, which are typically sorted by decreasing values, and associated eigenfunctions depends on the values of $ \gamma $ and $ \delta $. \exampleSymbol
\end{example}

\subsubsection{System identification}
\label{ssec:systemidentification}

With the aid of the full-state observable $ g(x) = x $, it is possible to reconstruct the governing equations of the underlying dynamical system. Note that $ \mathbb{X} $ needs to be bounded here---and for the identification of stochastic differential equations introduced below---so that $ g $ is (component-wise) contained in $ L^{\infty}(\mathbb{X})$. Let $ \xi_\ell $ be the $ \ell $th eigenvector of $ \widehat{L} $ and $ \Xi = [\xi_1, \dots, \xi_n] $. Furthermore, assume that $ B \in \mathbb{R}^{n \times d} $ is the matrix such that $ g(x) = B^\top \ts \psi(x) $. This can be easily accomplished by adding the observables $ \{ \ts x_i \ts \}_{i=1}^d $ to the dictionary. In order to obtain the Koopman modes for the full-state observable, define $ \varphi(x) = [\varphi_1(x), \dots, \varphi_n(x)]^\top = \Xi^\top \psi(x) $. Then
\begin{align*}
    g(x) = B^\top \ts \psi(x) = B^\top \ts \Xi^{-\top} \varphi(x).
\end{align*}
The column vectors of the matrix $ V = B^\top \ts \Xi^{-\top} $ are the Koopman modes $ v_\ell $. We obtain
\begin{equation*}
    (\mathcal{L} g)(x) = b(x) \approx \sum_{\ell=1}^n \ts \lambda_\ell \ts \varphi_\ell(x) v_\ell,
\end{equation*}
where the generator is applied component-wise. This allows us to decompose a system into different frequencies. The derivation of the modes is equivalent to the standard EDMD case, see \cite{KKS16, WKR15} for more details. Instead of representing the system in terms of the eigenvalues, eigenfunctions, and modes of the generator, we can also express it directly in terms of the basis functions, i.e.,
\begin{equation*}
    (\mathcal{L} g)(x) = b(x) \approx (L B)^\top \ts \psi(x),
\end{equation*}
which is then equivalent to SINDy, see Section~\ref{ssec:Relationships with other methods}.

\begin{example} \label{ex:systemidentification}
Using the eigenvalues $ \lambda_\ell $ and corresponding eigenfunctions $ \varphi_\ell(x) $ as determined in Example~\ref{ex:Simple example generator}, we can reconstruct the dynamical system from Example~\ref{ex:Simple example}. Only the Koopman modes $ v_2 = [0, \, 0.778]^\top \approx [0, \, \frac{\delta}{2\gamma - \delta}]^\top $, $ v_3 = [1, \, 0]^\top $, and $ v_6 = [0, \, -0.778]^\top \approx [0, \, -\frac{\delta}{2\gamma - \delta}]^\top $ are required for the reconstruction, the other modes are numerically zero. That is,
\begin{equation*}
    b(x) \approx \lambda_2 \ts \varphi_2(x) \ts v_2 + \lambda_3 \ts \varphi_3(x) \ts v_3 + \lambda_6 \ts \varphi_6(x) \ts v_6 \approx
    \begin{bmatrix}
        \gamma \ts x_1 \\
        \delta \ts (x_2 - x_1^2)
    \end{bmatrix}.
\end{equation*}
Expressing the system directly in terms of the basis functions results in
the same representation, the governing equations are hence identified correctly in both cases. \exampleSymbol
\end{example}

\begin{remark} \label{rem:thresholding}
In the above example, we assumed that the derivatives for the training data are known or can be computed with sufficient accuracy. If the derivatives, however, are noisy or inaccurate, the resulting matrix representations of the operators often become nonsparse and additional techniques such as denoising, total-variation regularization, or iterative hard thresholding might be required to eliminate spurious nonzero entries, see also~\cite{BrPrKu16} and references therein. In order to model the presence of noise, we replace $ b(x_l) $ by $ b(x_l) + \eta $, where $ \eta $ is sampled from a Gaussian distribution with standard deviation $ \varsigma $. By adding the iterative hard thresholding procedure proposed in~\cite{BrPrKu16} to gEDMD, which step by step removes entries larger than a given threshold $ \delta $ and then recomputes the coefficients, we can eliminate unwanted entries. The results, however, depend strongly on the chosen threshold as shown in Figure~\ref{fig:thresholding}. The smaller the signal-to-noise ratio, the larger the threshold needs to be to eliminate spurious nonzero entries, but a too large threshold will also eliminate the actual coefficients. The error here is defined to be the average difference between the true and the estimated coefficients after 10 iterations of the hard thresholding algorithm.

\begin{figure}
    \centering
    \includegraphics[width=0.45\textwidth]{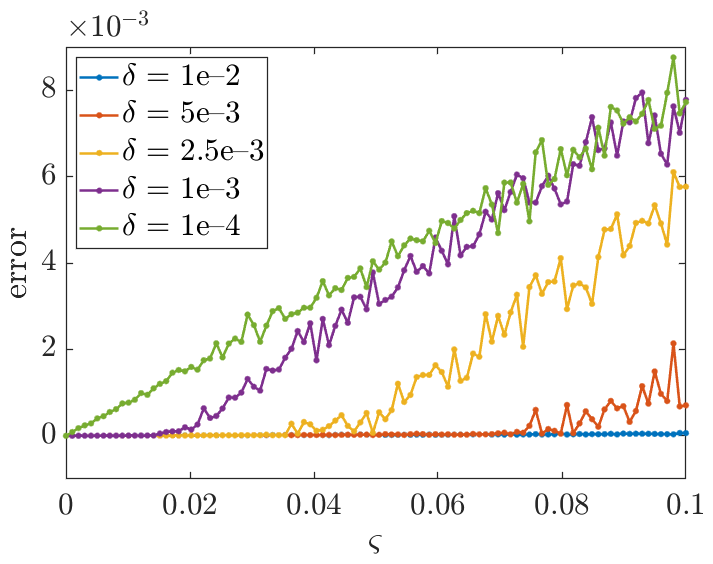}
    \caption{Recovery error as a function of the standard deviation $ \varsigma $ for different thresholds~$ \delta $. If no thresholding is used, the results coincide with the $ \delta = $ 1e--4 case. The results show that for inaccurate estimates of the derivatives additional techniques are required to obtain suitable representations of the system. Provided that the cut-off value is chosen judiciously, the hard thresholding approach enables us to recover the correct dynamics even in the presence of noise.}
    \label{fig:thresholding}
\end{figure}

\end{remark}

\subsubsection{Conservation laws}
\label{ssec:conservationdeterministic}

A function $E \colon \mathbb{R}^d \rightarrow \mathbb{R}$ is said to be a \emph{conserved quantity} if it remains constant for all~$t$ and all initial values, i.e., $\frac{\mathrm{d}}{\mathrm{d}t}E = \nabla E \cdot b = 0$, which immediately implies that $ E $ is an eigenfunction of the Koopman generator corresponding to the eigenvalue $ \lambda = 0 $; such invariants have already been considered in Koopman's original paper \cite{Ko31}. Similarly, eigenfunctions of the Perron--Frobenius generator associated with $ \lambda = 0 $ represent invariant densities. Conservation laws play an important role in physics and engineering, but are in principle hard to discover. The relationship between conservation laws and Koopman eigenfunctions has recently been exploited in \cite{Kaiser17, Kaiser18}, where conserved quantities are learned from data. In the same way, we can apply gEDMD to find non-trivial eigenfunctions corresponding to $ \lambda = 0 $.

\subsection{Non-deterministic dynamical systems}
\label{sec:gEDMD_stochastic}

Let us now extend these results to stochastic differential equations of the form \eqref{eq:SDE}. Given a set of training data $ \{\ts x_l \ts\}_{l=1}^m $ as above, we assume that $ \{\ts b(x_l) \ts\}_{l=1}^m $ and $ \{\ts \sigma(x_l) \ts\}_{l=1}^m $ are known or can be estimated.

\subsubsection{Generator approximation}

Let
\begin{equation} \label{eq:definition_dpsi_k}
    \mathrm{d}\psi_k(x) = (\mathcal{L} \psi_k)(x) = \sum_{i=1}^d b_i(x) \ts \pd{\psi_k}{x_i}(x) + \frac{1}{2} \sum_{i=1}^d \sum_{j=1}^d a_{ij}(x) \ts \pd{^2 \psi_k}{x_i \ts \partial x_j}(x)
\end{equation}
and
\begin{equation*}
    \mathrm{d}\Psi_X =
    \begin{bmatrix}
        \mathrm{d}\psi_1(x_1) & \dots  & \mathrm{d}\psi_1(x_m) \\
        \vdots                & \ddots & \vdots                \\
        \mathrm{d}\psi_k(x_1) & \dots  & \mathrm{d}\psi_k(x_m)
    \end{bmatrix}.
\end{equation*}
That is, in addition to the first derivatives of the basis functions, we now also need the second derivatives, which can again be precomputed analytically. Solving the resulting minimization problem, this leads to the least-squares approximation
\begin{equation*}
    M = \mathrm{d}\Psi_X \Psi_X^+ = \big(\mathrm{d}\Psi_X \Psi_X^\top\big) \big(\Psi_X \Psi_X^\top\big)^+ = \widehat{A} \ts \widehat{G}^+,
\end{equation*}
with
\begin{equation*}
    \widehat{A} = \frac{1}{m} \sum_{l=1}^m \mathrm{d}\psi(x_l) \ts \psi(x_l)^\top
    \quad \text{and} \quad
    \widehat{G} = \frac{1}{m} \sum_{l=1}^m \psi(x_l) \ts \psi(x_l)^\top.
\end{equation*}
As above, we obtain $ M = \widehat{L}^\top = \widehat{A} \ts \widehat{G}^+ $ as an empirical estimate of the generator and $ M^* = (\widehat{L}^*)^\top = \widehat{A}\ts^\top \widehat{G}^{+} $ as an estimate of the adjoint operator.

\begin{proposition}
\label{prop:convergence_gedmd}
In the infinite data limit, gEDMD converges to the Galerkin projection of the generator onto the space spanned by the basis functions $ \{\ts \psi_i \ts\}_{i=1}^n $.
\end{proposition}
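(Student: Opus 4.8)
The plan is to reduce the statement to the strong law of large numbers together with the continuity of matrix inversion. First I would make precise the sampling assumption implicit in the phrase ``infinite data limit'': the points $\{\ts x_l \ts\}_{l=1}^m$ are drawn from the measure $\mu$ appearing in the Galerkin inner product $\innerprod{\cdot}{\cdot}_\mu$, either independently or along an ergodic trajectory of $X_t$ whose invariant measure is $\mu$. Under this assumption the two empirical matrices are exactly Monte Carlo estimators of the Galerkin matrices from \eqref{eq:Galerkin_matrices}; reading off entries gives
\begin{equation*}
    (\widehat{G})_{ij} = \frac{1}{m}\sum_{l=1}^m \psi_i(x_l)\ts\psi_j(x_l), \qquad (\widehat{A})_{ij} = \frac{1}{m}\sum_{l=1}^m (\mathcal{L}\ts\psi_i)(x_l)\ts\psi_j(x_l).
\end{equation*}
It is worth stressing that, even in the stochastic setting, the function $\mathrm{d}\psi_k = \mathcal{L}\ts\psi_k$ in \eqref{eq:definition_dpsi_k} is a fixed, deterministic function of $x$ (built from $b$ and $a = \sigma\ts\sigma^\top$), so that no stochastic integral enters and the summands are ordinary pointwise evaluations.

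Second, I would observe that the summands above have $\mu$-expectations $\innerprod{\psi_i}{\psi_j}_\mu = G_{ij}$ and $\innerprod{\mathcal{L}\ts\psi_i}{\psi_j}_\mu = A_{ij}$, respectively. Invoking the strong law of large numbers in the i.i.d.\ case, or Birkhoff's ergodic theorem in the trajectory case, one then concludes that each entry converges almost surely, so that $\widehat{G}\to G$ and $\widehat{A}\to A$ entrywise as $m\to\infty$. The only hypothesis needed here is integrability of the summands, i.e.\ $\psi_i\ts\psi_j$ and $(\mathcal{L}\ts\psi_i)\ts\psi_j \in L^1_\mu(\mathbb{X})$ for all $i,j$; this holds as soon as the dictionary lies in the domain of $\mathcal{L}$ with $\psi_i,\ \mathcal{L}\ts\psi_i \in L^2_\mu(\mathbb{X})$, which is already the standing assumption for the Galerkin projection to be well defined.

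The final and most delicate step is to pass to the limit through the pseudoinverse in $M = \widehat{A}\ts\widehat{G}^+$. The hard part is that the Moore--Penrose pseudoinverse is \emph{not} continuous at rank-deficient points, so entrywise convergence $\widehat{G}\to G$ alone does not guarantee $\widehat{G}^+\to G^+$. I would circumvent this by assuming the dictionary to be linearly independent in $L^2_\mu(\mathbb{X})$, which makes the Gram matrix $G$ symmetric positive definite and hence invertible. Then for all sufficiently large $m$ the matrix $\widehat{G}$ is invertible as well, its pseudoinverse coincides with its ordinary inverse, and inversion is continuous on the open set of invertible matrices; combining this with $\widehat{A}\to A$ and the continuity of matrix multiplication yields
\begin{equation*}
    M = \widehat{A}\ts\widehat{G}^+ \longrightarrow A\ts G^{-1} = L^\top,
\end{equation*}
which is precisely the transpose of the Galerkin matrix $L$. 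The adjoint estimate $M^* = \widehat{A}^\top\widehat{G}^+ \to A^\top G^{-1} = (L^*)^\top$ follows by the identical argument, using $\innerprod{\mathcal{L}\ts\psi_j}{\psi_i}_\mu = A_{ij}$, and the deterministic case is recovered verbatim by dropping the second-order terms in \eqref{eq:definition_dpsi_k}.
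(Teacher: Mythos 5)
Your proposal is correct and follows essentially the same route as the paper's proof: both identify $\widehat{A}$ and $\widehat{G}$ as Monte Carlo estimators of the Galerkin matrices $A$ and $G$ from \eqref{eq:Galerkin_matrices} and let the law of large numbers do the work. You additionally make explicit two points the paper leaves implicit---the integrability hypothesis and, more importantly, that passing the limit through $\widehat{G}^+$ requires $G$ to be invertible (linearly independent dictionary) since the pseudoinverse is discontinuous at rank-deficient matrices---which is a genuine and worthwhile tightening, not a different argument.
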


\begin{proof}
The proof is equivalent to the counterpart for standard EDMD, see \cite{WKR15, KKS16}. Letting $ m $ go to infinity, we obtain
\begin{alignat*}{4}
    \widehat{A}_{ij} &= \frac{1}{m} \sum_{l=1}^m \mathrm{d}\psi_i(x_l) \ts \psi_j(x_l)
                &&\underset{\scriptscriptstyle m \rightarrow \infty}{\longrightarrow} \int (\mathcal{L} \psi_i)(x) \ts \psi_j(x) \ts \mathrm{d}\mu(x)
                &&= \innerprod{\mathcal{L} \psi_i}{\psi_j}_\mu &&= A_{ij}, \\
    \widehat{G}_{ij} &= \frac{1}{m} \sum_{l=1}^m \psi_i(x_l) \ts \psi_j(x_l)
                &&\underset{\scriptscriptstyle m \rightarrow \infty}{\longrightarrow} \int \psi_i(x) \ts \psi_j(x) \ts \mathrm{d}\mu(x) &&
                = \innerprod{\psi_i}{\psi_j}_\mu  &&= G_{ij},
\end{alignat*}
where $ x_l \sim \mu $. That is, the matrices $ \widehat{A} $ and $ \widehat{G} $ are empirical estimates of the matrices $ A $ and $ G $, respectively.
\end{proof}

\begin{remark}
If the drift and diffusion coefficients of the stochastic differential equation~\eqref{eq:SDE} are not known, they can be approximated via finite differences. In fact, by the Kramers--Moyal formulae,
\begin{alignat*}{2}
    b(x) &= \lim_{t \to 0} b^t(x) &&:= \lim_{t \to 0} \mathbb{E}\left[ \frac{1}{t} (X_t - x) \relmiddle| X_0 = x \right], \\
    a(x) &= \lim_{t \to 0} a^t(x) &&:= \lim_{t \to 0} \mathbb{E}\left[ \frac{1}{t} (X_t - x)(X_t - x)^\top \relmiddle| X_0 = x \right].
\end{alignat*}
These expressions can be evaluated pointwise by spawning multiple short trajectories from each data point $x_l$, and then estimating the expectations above via Monte Carlo. Alternatively, if a single ergodic simulation at time step $t$ is available, we can also replace the definition of $\mathrm{d}\psi_k$ in \eqref{eq:definition_dpsi_k} by
\begin{equation*}
\mathrm{d}\psi_k(x_l) = \frac{1}{t} (x_{l+1} - x_{l}) \cdot \nabla \psi_k(x_l) + \frac{1}{2\ts t}\left[(x_{l+1} - x_l)(x_{l+1} - x_l)^\top\right] : \nabla^2 \psi_k(x_l).
\end{equation*}
It was shown in \cite{BNC18} that in the infinite data limit
\begin{equation*}
\lim_{m \to \infty} \widehat{A}_{ij} = \innerprod{b^t \cdot \nabla \psi_i + \frac{1}{2} a^t : \nabla^2 \psi_i}{\psi_j}_\mu.
\end{equation*}
In this case, gEDMD converges to a Galerkin approximation of the differential operator with drift and diffusion coefficients $b^t$ and $a^t$.
\end{remark}

\begin{remark}
\label{rem:gedmd_reversible}
If the stochastic dynamics \eqref{eq:SDE} are reversible with respect to the measure~$\mu$, we only require first-order derivatives of the basis. In this case, the Galerkin matrix $A$ in~\eqref{eq:Galerkin_matrices} can be expressed as
\begin{equation*}
    A_{ij} = \innerprod{\mathcal{L}\psi_i}{\psi_j}_\mu = -\frac{1}{2}\int \nabla \psi_i \ts \sigma \ts \sigma^\top \nabla \psi_j^\top \, \mathrm{d}\mu,
\end{equation*}
where the drift coefficient enters only implicitly via the invariant measure $\mu$, see \cite{Zhang:2016aa}. Using the gradient matrix $\nabla \Psi \in \mathbb{R}^{n \times d}$, where each row corresponds to the gradient of a basis function, the empirical estimator $\widehat{A}$ for $A$ is then defined as follows:
\begin{align*}
\widehat{A} &= -\frac{1}{2m} \sum_{l=1}^m \mathrm{d}\psi(x_l) \ts \mathrm{d}\psi(x_l)^\top,
\end{align*}
with $ \mathrm{d}\psi(x_l) = \nabla \Psi(x_l) \ts \sigma(x_l) $.
\end{remark}

\begin{example}
Let us first compute eigenfunctions of the generator. We assume that $ \{\ts b(x_l) \ts\}_{l=1}^m $ and $ \{\ts \sigma(x_l) \ts\}_{l=1}^m $ are known and not estimated from data.
\begin{enumerate}[wide, itemindent=\parindent, itemsep=0ex, topsep=0.5ex]
\item We consider again the Ornstein--Uhlenbeck process defined in Example~\ref{ex:Ornstein Uhlenbeck}. For the numerical experiments, we set $ \alpha = 1 $ and $ \beta = 4 $ and select a basis comprising monomials of order up to and including ten. Using only $ 100 $ uniformly generated test points in $ \mathbb{X} = [-2, 2] $, we obtain the Koopman eigenfunctions shown in Figure~\ref{fig:OU_gEDMD}(a), which are virtually indistinguishable from the analytical solution. Standard EDMD would typically need more test points for such an accurate approximation of the dominant eigenfunctions, see~\cite{KNKWKSN18} for details.\!\footnote{Note that although the definition of the Ornstein--Uhlenbeck process is slightly different in \cite{KNKWKSN18}, the systems are in fact identical.} The results for the Perron--Frobenius generator using monomials are not as good, see Figure~\ref{fig:OU_gEDMD}(b). Replacing monomials by a basis containing Gaussian functions the results improve considerably as shown in Figure~\ref{fig:OU_gEDMD}(c). This illustrates that it is crucial to select suitable basis functions, which are, however, generally not known in advance. The sparsity patterns of the generator approximation using EDMD and gEDMD are compared in Figure~\ref{fig:OU_gEDMD}(d--f), showing that EDMD leads to less sparse matrices with additional spurious nonzero entries. \exampleSymbol

\begin{figure}
    \centering
    \begin{minipage}{0.32\textwidth}
        \centering
        \subfiguretitle{(a)}
        \includegraphics[width=\textwidth]{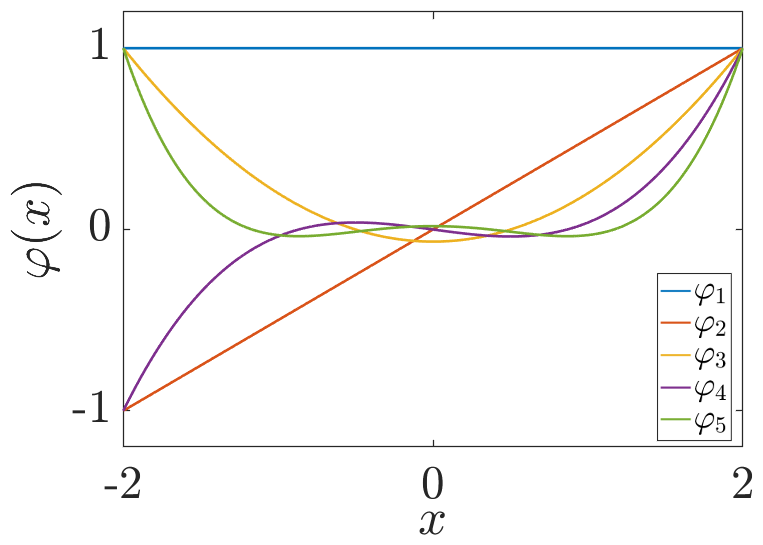}
    \end{minipage}
    \begin{minipage}{0.32\textwidth}
        \centering
        \subfiguretitle{(b)}
        \includegraphics[width=\textwidth]{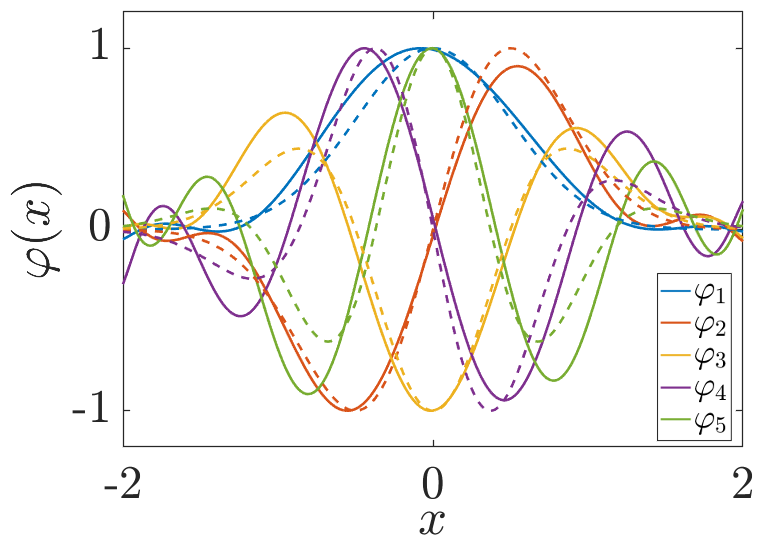}
    \end{minipage}
    \begin{minipage}{0.32\textwidth}
        \centering
        \subfiguretitle{(c)}
        \includegraphics[width=\textwidth]{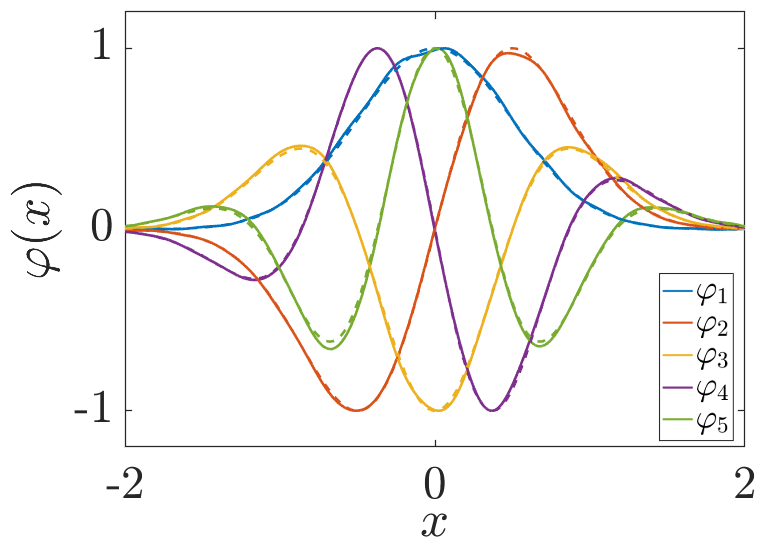}
    \end{minipage}
    \\[1ex]
    \begin{minipage}{0.32\textwidth}
        \centering
        \subfiguretitle{(d)}
        \includegraphics[width=0.95\textwidth]{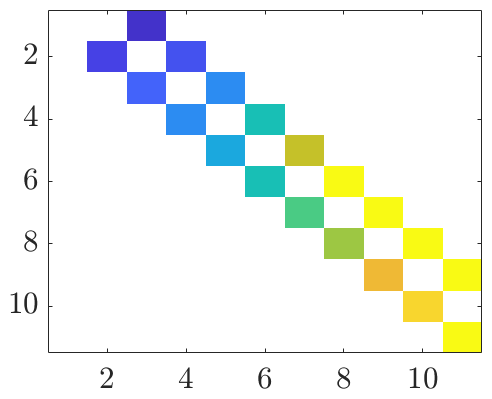}
    \end{minipage}
    \begin{minipage}{0.32\textwidth}
        \centering
        \subfiguretitle{(e)}
        \includegraphics[width=0.95\textwidth]{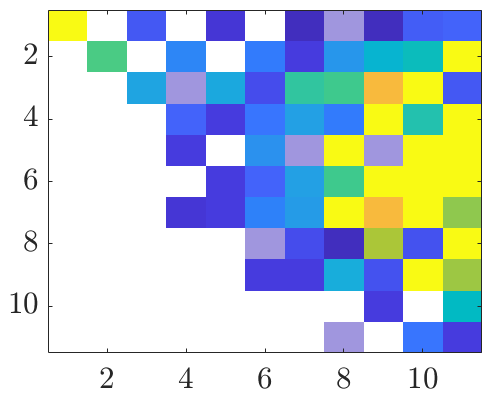}
    \end{minipage}
    \begin{minipage}{0.32\textwidth}
        \centering
        \subfiguretitle{(f)}
        \includegraphics[width=0.95\textwidth]{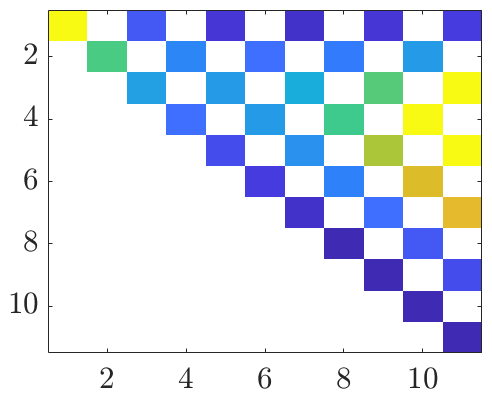}
    \end{minipage}
    \caption{Eigenfunctions of (a) the Koopman generator and (b) the Perron--Frobenius generator associated with the Ornstein--Uhlenbeck process computed using gEDMD with monomials of order up to ten. The dashed lines represent the analytically computed eigenfunctions. (c) Eigenfunctions of the Perron--Frobenius generator, where the basis now comprises 30 Gaussian functions. (d) Sparsity pattern of $ \widehat{L} $ computed with gEDMD, (e) sparsity pattern of $ \widehat{K}_\tau $ computed with EDMD, and (f) sparsity pattern of $ \exp(\tau \widehat{L}) $, where $ \tau $ is the lag time used for EDMD.}
    \label{fig:OU_gEDMD}
\end{figure}

\item We construct a more complicated example by defining $ V(x) = (x_1^2 - 1)^2 + x_2^2 $, which represents the renowned double-well potential, but then, instead of using isotropic noise, add a state-dependent diffusion term to obtain a stochastic differential equation of the form~\eqref{eq:SDE}, with
\begin{equation*}
    b(x) = -\nabla V(x) =
    \begin{bmatrix}
         4 \ts x_1 - 4 \ts x_1^3 \\
        -2 \ts x_2
    \end{bmatrix}
    \quad \text{and} \quad
    \sigma(x) =
    \begin{bmatrix}
        0.7 & x_1 \\
        0 & 0.5
    \end{bmatrix}.
\end{equation*}
The system exhibits metastable behavior, where the rare transitions are the jumps between the two wells. The potential and the two dominant eigenfunctions of the Perron--Frobenius generator computed with the aid of gEDMD are shown in Figure~\ref{fig:Double well}. Here, we generated $ 30\ts000 $ test points in $ \mathbb{X} = [-2, 2] \times [-1, 1] $ and selected a basis comprising $ 300 $ radial basis functions (whose centers are the midpoints of a regular box discretization) with bandwidth $ \sigma = 0.2 $. \exampleSymbol

\begin{figure}
    \centering
    \begin{minipage}{0.32\textwidth}
        \centering
        \subfiguretitle{(a)}
        \includegraphics[width=\textwidth]{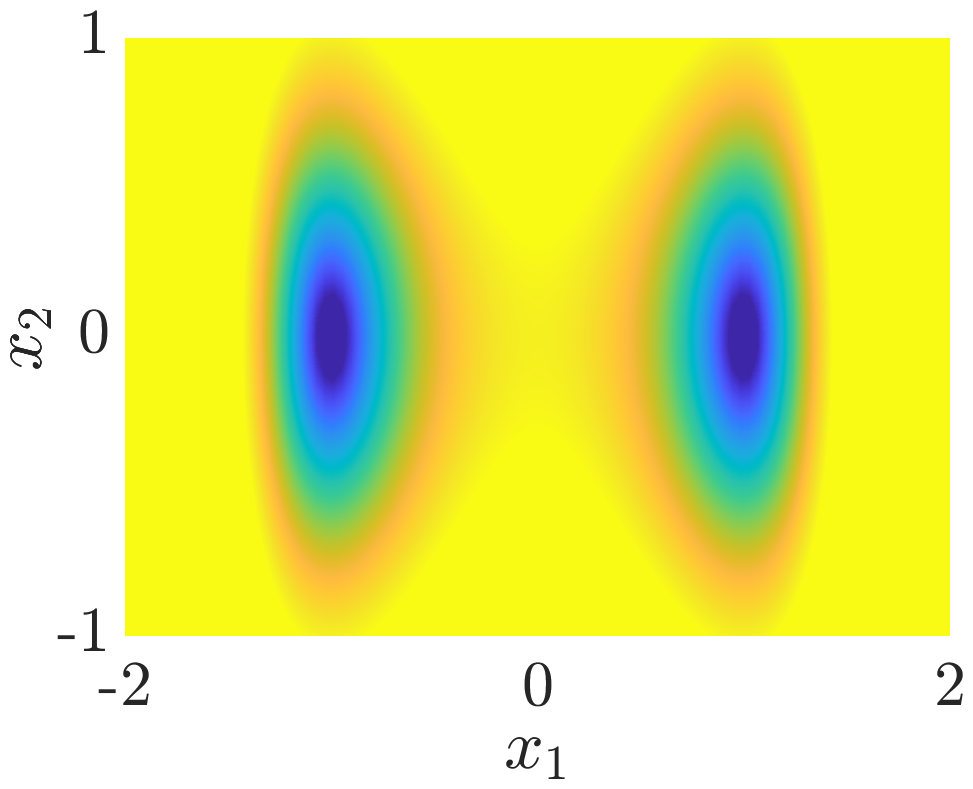}
    \end{minipage}
    \begin{minipage}{0.32\textwidth}
        \centering
        \subfiguretitle{(b)}
        \includegraphics[width=\textwidth]{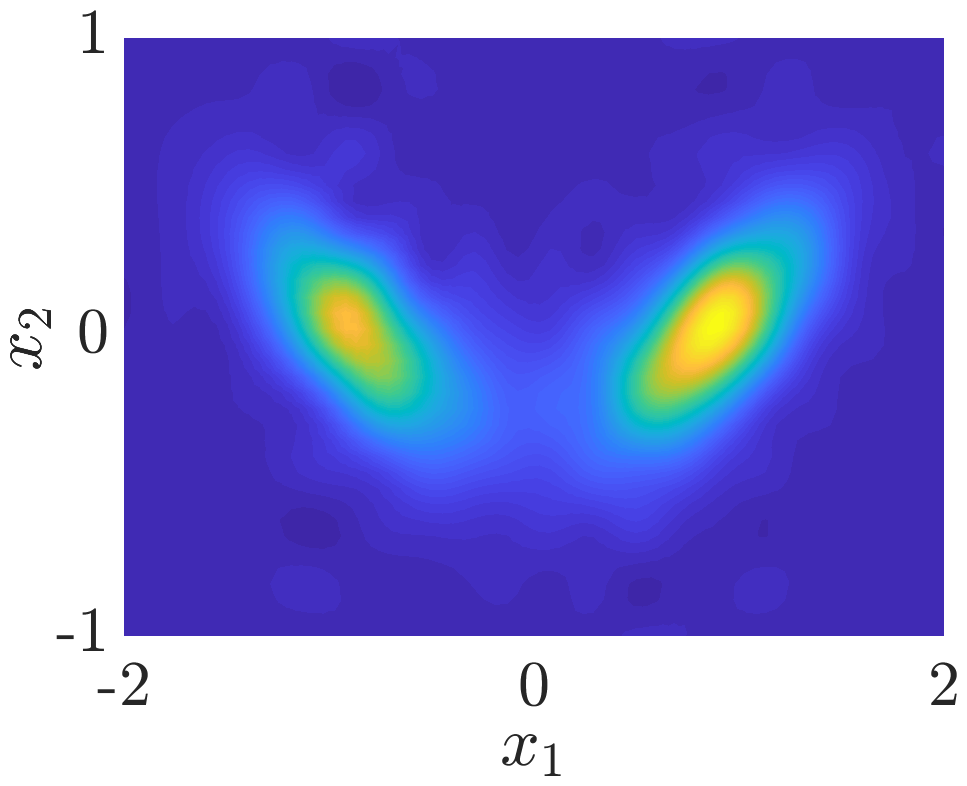}
    \end{minipage}
    \begin{minipage}{0.32\textwidth}
        \centering
        \subfiguretitle{(c)}
        \includegraphics[width=\textwidth]{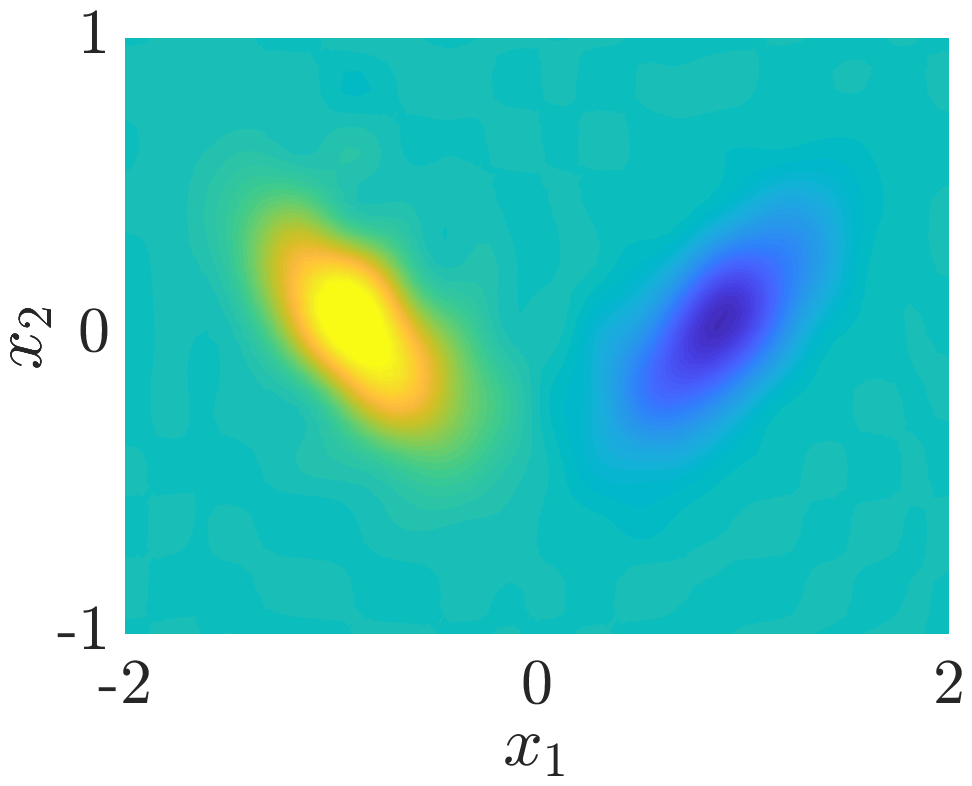}
    \end{minipage}
    \caption{(a)~Double-well potential. (b)~First and (c)~second eigenfunction of the Perron--Frobenius generator. Due to the non-isotropic noise the wells are tilted. The second eigenfunction clearly separates the two wells. In all plots, blue corresponds to small and yellow to large values.}
    \label{fig:Double well}
\end{figure}

\end{enumerate}

\end{example}

\subsubsection{System identification}
\label{sec:non_deterministic_sys_id}

As for deterministic systems, we can utilize the generator approximation also for system identification. In order to determine $ b $, we simply plug in the full-state observable $ g $ again. In addition to the drift term, we need to identify the diffusion term. This can be accomplished as follows: Note that for $ \psi_k(x) = x_i \ts x_j $, it holds that
\begin{equation*}
    (\mathcal{L} \psi_k)(x) = b_i(x) \ts x_j + b_j(x) \ts x_i + a_{ij}(x).
\end{equation*}
Since we already obtained a representation of $ b $ in the previous step, we can subtract the first two terms to obtain $ a_{ij} $. Here, we have to assume that both $ b_i $ and $ b_j $ can be written in terms of the basis functions and that, furthermore, also the functions multiplied by $ x_j $ or $ x_i $, respectively, are contained in the space spanned by $ \{\ts \psi_i \ts\}_{i=1}^n $. For instance, if $ b $ contains monomials of degree $ p $, then the dictionary must also contain monomials of degree $ p+1 $. For other types of basis functions, we have to make sure that the aforementioned requirement is satisfied as well.

\begin{example}
Let us illustrate the recovery of $ b $ and $ a $ from the generator representation.
\begin{enumerate}[wide, itemindent=\parindent, itemsep=0ex, topsep=0.5ex]
\item For the Ornstein--Uhlenbeck process, we immediately obtain $ b(x) = (\mathcal{L} \psi_2)(x) = -\alpha \ts x $ and $ a(x) = (\mathcal{L} \psi_3)(x) - 2 \ts b(x) \ts x = 2 \ts \beta^{-1} $, see the matrix representation of the generator in Example~\ref{ex:OrnUhl1}, which implies $ \sigma(x) = \sqrt{2 \ts \beta^{-1}} $. Thus, the system is identified correctly.

\item For the double-well problem, we generate $ 8000 $ random points in $ \mathbb{X} = [-2, 2] \times [-1, 1] $ and use the exact values for $ b(x) $ and $ \sigma(x) $. We then obtain an approximation of the generator whose first six columns for a dictionary comprising monomials up to order four are given by
\begin{equation*}
    \kbordermatrix{
          & 1 & x_1 & x_2 & x_1^2 & x_1 \ts x_2 & x_2^2 \\
        1               & 0 &  0 &  0 &  0.49 &  0   &  0.25 \\
        x_1             & 0 &  4 &  0 &  0    &  0.5 &  0    \\
        x_2             & 0 &  0 & -2 &  0    &  0   &  0    \\
        x_1^2           & 0 &  0 &  0 &  9    &  0   &  0    \\
        x_1 \ts x_2     & 0 &  0 &  0 &  0    &  2   &  0    \\
        x_2^2           & 0 &  0 &  0 &  0    &  0   & -4    \\
        x_1^3           & 0 & -4 &  0 &  0    &  0   &  0    \\
        x_1^2 \ts x_2   & 0 &  0 &  0 &  0    &  0   &  0    \\
        x_1 \ts x_2^2   & 0 &  0 &  0 &  0    &  0   &  0    \\
        x_2^3           & 0 &  0 &  0 &  0    &  0   &  0    \\
        x_1^4           & 0 &  0 &  0 & -8    &  0   &  0    \\
        x_1^3 \ts x_2   & 0 &  0 &  0 &  0    & -4   &  0    \\
        x_1^2 \ts x_2^2 & 0 &  0 &  0 &  0    &  0   &  0    \\
        x_1 \ts x_2^3   & 0 &  0 &  0 &  0    &  0   &  0    \\
        x_2^4           & 0 &  0 &  0 &  0    &  0   &  0
    }.
\end{equation*}
We can see that $ b $ is recovered correctly by the columns two and three. Furthermore, for the entries of the matrix $ a $, we obtain
\begin{alignat*}{2}
    a_{11}(x) &= (\mathcal{L}\psi_4)(x) - 2 \ts b_1(x) \ts x_1 &&= 0.49 + x_1^2, \\
    a_{12}(x) &= (\mathcal{L}\psi_5)(x) - b_1(x) \ts x_2 - b_2(x) \ts x_1 &&= 0.5 \ts x_1, \\
    a_{22}(x) &= (\mathcal{L}\psi_6)(x) - 2 \ts b_2(x) \ts x_2 &&= 0.25,
\end{alignat*}
which is indeed $ \sigma \sigma^\top $. Note that using only monomials of order up to three would allow us to recover $ b $ but not $ a $. \exampleSymbol
\end{enumerate}
\end{example}

\begin{remark}
It is worth noting that:
\begin{enumerate}[wide, itemindent=\parindent, itemsep=0ex, topsep=0.5ex]
\item Although we presented only systems composed of monomials (mainly for the sake of illustration), the proposed method allows for arbitrary dictionaries containing twice continuously differentiable functions.
\item We identify $ a = \sigma \ts \sigma^\top $ and not $ \sigma $ itself. If it is necessary to evaluate $ \sigma $, e.g., when using the identified system to generate new trajectories, we can obtain it, for instance, by a Cholesky decomposition of $ a $, see also \cite{Zhang:2016aa}. Note, however, that $ \sigma $ is not uniquely defined.
\item The method relies on accurate estimates of the drift and diffusion terms. Noisy data will lead to nonsparse solutions, which can then be improved by applying iterative hard thresholding again, see Remark~\ref{rem:thresholding}. We now add noise with variance $ \varsigma = 0.1 $ to the drift and diffusion terms. After sparsifying the estimated matrix approximation of the Koopman generator with a threshold $ \delta = 0.1 $, we obtain
\begin{equation*}
    b(x) = \begin{bmatrix}
       4.00057 \ts x_1 - 4.00012 \ts x_1^3 \\
       -1.99998 \ts x_2 \\
    \end{bmatrix}
\end{equation*}
and
\begin{align*}
    a_{11}(x) &= 0.50035 + 0.99901 \ts x_1^2 - 0.00016 \ts x_1^4, \\
    a_{12}(x) &= 0.49729 \ts x_1 - 0.00250 \ts x_1 \ts x_2 + 0.00097 \ts x_1^3 \ts x_2, \\
    a_{22}(x) &= 0.25648 + 0.00720 \ts x_2^2.
\end{align*}
Note that the noise is picked up by the diffusion term which might thus be overestimated. Nevertheless, the coefficients are still close to the exact solution. Instead of eliminating small coefficients of $ (\mathcal{L}\psi_k)(x) $, we could apply iterative hard thresholding to the coefficients of $ a_{ij}(x) $ to find a parsimonious representation of $ a(x) $.
\end{enumerate}
\end{remark}

This method to discover the drift and diffusion terms of stochastic differential equations suffers from the same shortcomings as SINDy: The validity of the learned model depends crucially on whether or not both $ b $ and $ a $ can be expressed in terms of the basis functions and also on the availability of accurate estimates of the derivatives. Ideally, the resulting model is parsimonious, minimizing model complexity while simultaneously enabling accurate predictions without overfitting. Nonsparse solutions typically indicate that the expressivity of the dictionary is not sufficient or that the data is too noisy. Adding more basis functions or increasing the size of the data set might alleviate such problems. However, positing that the model comprises only a few simple terms, the method presented here allows for the identification of the governing equations of stochastic dynamical systems. Additionally, the approximation of the generator is an important problem in itself. The eigenvalues and eigenfunctions contain information about time scales and metastable sets and can be used for model reduction and control. This will be described in more detail in Section~\ref{sec:Further applications}.

\subsubsection{Conservation laws}

If $E$ is a conserved quantity of a non-deterministic system, then the definition of the Koopman operator \eqref{eq:stochastic_koopman_op} and the partial differential equation $ \pd{u}{t} = \mathcal{L} u $ imply that $\mathcal{L} E = 0$, just as in the deterministic case. Hence, conserved quantities can also be approximated by extracting non-trivial eigenfunctions associated with $\lambda = 0$ using gEDMD. The same precautions as discussed in Section \ref{ssec:conservationdeterministic} apply.

\begin{remark}
For a stochastic dynamical system in the sense of Stratonovich, i.e.,
\begin{equation*}
    \mathrm{d}X_t = b(X_t) \ts \mathrm{d}t + \sigma(X_t) \circ \mathrm{d}W_t,
\end{equation*}
a sufficient condition for $E$ to be conserved is
\begin{equation*}
     \nabla E^\top \left [ b  + \sum_{i=1}^s \sigma_i \right] = 0,
\end{equation*}
which is similar to the deterministic case. Here, $\sigma_i$ denotes the $i$th column of $\sigma$. This result follows directly from the chain rule of Stratonovich calculus, see \cite{FaLe09, ZhZhHoSo16}.
\end{remark}

\begin{example}
Consider the noisy Duffing oscillator, i.e., for $\alpha, \beta, \varepsilon \in \mathbb{R}$ we have a Stratonovich stochastic differential equation with
\begin{equation*}
    b(x) =
    \begin{bmatrix}
        x_2 \\
        -\alpha x_1 -\beta x_1^3
    \end{bmatrix}
    \quad \text{and} \quad
    \sigma(x) = \varepsilon \ts b(x).
\end{equation*}
To apply gEDMD, we convert it to an It\^o stochastic differential equation using the drift correction formula to correct the noise-induced drift, which is defined componentwise as
\begin{equation*}
    c_i(x) = \sum_{j=1}^d \sum_{k = 1}^s  \frac{\partial \sigma_{ik}}{\partial x_j}(x) \ts \sigma_{jk}(x), \quad i = 1,\dotsc, d,
\end{equation*}
see \cite{St66}.
We obtain the It\^o stochastic differential equation
\begin{equation*}
        \mathrm{d}X_t = \big(b(X_t) + \tfrac{1}{2} \ts c(X_t)\big) \ts \mathrm{d}t + \sigma(X_t) \ts \mathrm{d}W_t \quad\text{with}\quad c(x) = \varepsilon^2
    \begin{bmatrix}
    b_2(x) \\
    \left(-\alpha  -3\ts\beta x_1^2\right) b_1(x)
    \end{bmatrix}.
\end{equation*}
Setting $\alpha = -1.1$, $\beta = 1.1$, $\varepsilon = 0.05$, choosing a dictionary that contains monomials, and applying gEDMD, the multiplicity of the eigenvalue $\lambda = 0$ is two and we obtain a conserved quantity of the form
\begin{equation*}
    E(x) \approx \tfrac{\alpha}{2} x_1^2 + \tfrac{\beta}{4} x_1^4 + \tfrac{1}{2}x_2^2 + c,
\end{equation*}
where $c \in \mathbb{R}$ is an arbitrary constant. \exampleSymbol
\end{example}

\subsection{Relationships with other methods}
\label{ssec:Relationships with other methods}

We will now point out similarities and differences between the methods presented above and other well-known approaches for systems identification and generator approximation.

\subsubsection{SINDy}

SINDy \cite{BrPrKu16} was designed to learn ordinary differential equations from simulation or measurement data. Just like gEDMD, it requires a set of states and the corresponding time derivatives. Defining $ \dot{X} = [\dot{x}_1, \dot{x}_2, \dots, \dot{x}_m] $, SINDy minimizes the cost function $ \norm{\smash{\dot{X} - M_{\scriptscriptstyle S} \Psi_X}}_F$, i.e., $ M_{\scriptscriptstyle S} = \dot{X} \Psi_X^+ $. Here, we omit the sparsification constraints, which can be added in the same way to gEDMD as described above. Recall that we assume that the full-state observable is given by $ g(x) = B^\top \psi(x) $. SINDy can thus be seen as a special case of gEDMD since
\begin{equation*}
    \dot{x} = B^\top \dot{\psi}(x)
            \approx B^\top M \psi(x)
            = \underbrace{B^\top \dot{\Psi}_X}_{\dot{X}} \Psi_X^+ \psi(x)
            = \underbrace{\dot{X} \Psi_X^+}_{M_{\scriptscriptstyle S}} \psi(x)
            = M_{\scriptscriptstyle S} \ts \psi(x).
\end{equation*}

\subsubsection{Koopman lifting technique}

The Koopman lifting technique \cite{MauGon16, MauGon17} uses the infinitesimal generator $ \mathcal{L} $ for system identification. While tailored mainly to ordinary differential equations, extensions to stochastic differential equations with isotropic noise are also considered. First, the Koopman operator for a fixed lag time $ \tau $ is estimated from trajectory data with the aid of standard EDMD. Then an approximation of the generator is obtained by taking the matrix logarithm, i.e.,
\begin{equation*}
    \widehat{L} = \tfrac{1}{\tau} \log \widehat{K}_\tau,
\end{equation*}
where $ \widehat{K}_\tau $ is the matrix representation of the Koopman operator with respect to the chosen basis $ \psi $ (and lag time $ \tau $). The last step is to estimate the governing equations in the same way as illustrated in Example~\ref{ex:systemidentification} for gEDMD. The Koopman lifting technique does not require the time-derivatives of the states or the partial derivatives of the basis functions, but only pairs of time-lagged data. However, the non-uniqueness of the matrix logarithm can cause problems and a sufficiently small sampling time $ \tau $ is needed to ensure that the (possibly complex) eigenvalues lie in the strip $ \left\{ z \in \mathbb{C} : \vert \Im(z) \vert < \pi \right\} $, where $ \Im $ denotes the imaginary part. Roughly speaking, only an infinite sampling rate allows us to capture the entire spectrum of frequencies \cite{MauGon17}. Our approach generalizes to arbitrary systems of the form \eqref{eq:SDE}, but the estimation of the diffusion term can be carried over to the lifting technique as well. This could be a valuable alternative, e.g., when only trajectory data is available. If the exact derivatives for the training data are known, then gEDMD is in general more accurate than the lifting approach. If, on the other hand, the derivatives for gEDMD have to be approximated from trajectory data, then the accuracy depends on the order of the finite-difference approximation and the step size, while the accuracy of the lifting approach depends on the lag time and the matrix logarithm implementation.

\subsubsection{KRONIC}

KRONIC \cite{Kaiser17, Kaiser18}, which stands for \emph{Koopman reduced order nonlinear identification and control}, is a data-driven method for discovering Koopman eigenfunctions, which are then used for control and the detection of conservation laws. The approach is based on SINDy and assumes that an eigenvalue is known a priori (or simultaneously learns the eigenvalue and corresponding eigenfunction). In our notation, the resulting problem can be written as
\begin{equation*}
    \left( \lambda_\ell \Psi_X^\top - \dot{\Psi}_X^\top \right) \xi_\ell = 0,
\end{equation*}
which, multiplying from the left by $ \Psi_X $ and assuming that $ \Psi_X \Psi_X^\top $ is regular, becomes the gEDMD eigenvalue problem. This operator formulation is briefly mentioned in \cite{Kaiser17} as well. Thus, for deterministic systems, despite their different derivations, gEDMD and KRONIC are strongly related.

\section{Further applications}
\label{sec:Further applications}

In addition to identifying fast and slow modes, governing equations, or conservation laws, the Koopman generator has further applications that we will briefly demonstrate.

\subsection{Coarse-grained dynamics and gEDMD}

\subsubsection{Galerkin approximation}
\label{sec:cg_gedmd}

In what follows, we describe how models of the Koopman generator can be used to identify reduced order models of a (possibly high-dimensional) stochastic dynamical system. To get started, we recapitulate the model reduction formalism introduced by \cite{Legoll:2010aa,Zhang:2016aa}.
Assume the stochastic process given by \eqref{eq:SDE} possesses a unique invariant density $\mu$, and let $\xi \colon \R^d \to \R^p$ be a coarse-graining function which maps $\R^d$ to a lower-dimensional space $\R^p$. The coarse-graining map induces a reduced probability measure with density $\nu$ on $\R^p$. Consider the space $L^2_\nu$ of square-integrable functions of the reduced variables $z$. In fact, $L^2_\nu$ is an infinite-dimensional subspace of $L^2_\mu$, if each function $f \in L^2_\nu$ is identified with the function $f \circ \xi \in L^2_\mu$. 
Let $\mathcal{P}$ be the orthogonal projection onto $L^2_\nu$. Define a coarse-grained generator as
\begin{equation} \label{eq:projected_generator}
    \mathcal{L}^\xi = \mathcal{P}\mathcal{L}\mathcal{P}.
\end{equation}
Given suitable assumptions on the original process \eqref{eq:SDE}, $\mathcal{L}^\xi$ is again the infinitesimal generator of a stochastic dynamics on $\mathbb{R}^p$, with invariant density $\nu$ and effective drift and diffusion coefficient $b^\xi,\, a^\xi$ \cite{Legoll:2010aa,Zhang:2016aa}. 

First, we show that for a basis set of functions defined only on $\mathbb{R}^p$, gEDMD converges to a Galerkin approximation of the coarse-grained generator $\mathcal{L}^\xi$:
\begin{proposition}
Let $\mathbb{V} = \mathrm{span}\{\psi_k \}_{k=1}^n$ be a subspace of $L^2_\nu$. Then gEDMD applied to the functions $\widetilde{\psi}_k = \psi_k \circ \xi$ converges to the Galerkin projection of $\mathcal{L}^\xi$ onto $\mathbb{V}$. Here, \eqref{eq:definition_dpsi_k} needs to be updated by
\begin{align*}
\mathrm{d}\widetilde{\psi}_k(x) &= b(x) \ts \nabla_x \ts \xi(x) \ts \nabla_z \psi_k^\top(\xi(x)) + \frac{1}{2}(a(x) : H^\xi(x)) \ts \nabla_z \psi_k^\top(\xi(x)) \\
& ~~+ \frac{1}{2 }\nabla^2_z \psi_k(\xi(x)) : \left[ \nabla \xi(x)^\top a(x) \nabla \xi(x)\right],
\end{align*}
where $\nabla_x \ts \xi \in \mathbb{R}^{d \times p}$ is the Jacobian of $\xi$, and $H^\xi \in \mathbb{R}^{d \times d \times p}$ is the tensor of Hessian matrices for each component of $\xi$. The Frobenius inner product between $a$ and $H^\xi$ is applied to the first two dimensions of $H^\xi$.
\end{proposition}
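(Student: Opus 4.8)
The proof splits naturally into two parts. First I would verify that the displayed expression for $\mathrm{d}\widetilde{\psi}_k$ is exactly $(\mathcal{L}\widetilde{\psi}_k)(x)$, i.e.\ the full generator $\mathcal{L}$ of \eqref{eq:SDE} applied to the lifted function $\widetilde{\psi}_k = \psi_k \circ \xi$. Second I would invoke Proposition~\ref{prop:convergence_gedmd}, which already states that gEDMD converges to the Galerkin projection of $\mathcal{L}$ onto $\widetilde{\mathbb{V}} = \mathrm{span}\{\widetilde{\psi}_k\}_{k=1}^n \subset L^2_\mu$, and then show that this limiting projection coincides with the Galerkin projection of the coarse-grained generator $\mathcal{L}^\xi = \mathcal{P}\mathcal{L}\mathcal{P}$ onto $\mathbb{V}$.

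For the first part, the formula is a direct application of the chain rule (equivalently, It\^o's lemma applied to $\psi_k(\xi(X_t))$). The first-order term $b \cdot \nabla_x \widetilde{\psi}_k$ reproduces the drift contribution $b \ts \nabla_x\xi \ts \nabla_z\psi_k^\top$ via $\nabla_x \widetilde{\psi}_k = \nabla_x\xi \ts \nabla_z\psi_k$. Differentiating once more, the Hessian of the composition splits into two pieces,
\begin{equation*}
\frac{\partial^2 \widetilde{\psi}_k}{\partial x_i \ts \partial x_j} = \sum_{\alpha,\beta}\frac{\partial^2\psi_k}{\partial z_\alpha \ts \partial z_\beta}\frac{\partial\xi_\alpha}{\partial x_i}\frac{\partial\xi_\beta}{\partial x_j} + \sum_\alpha \frac{\partial\psi_k}{\partial z_\alpha}\frac{\partial^2\xi_\alpha}{\partial x_i \ts \partial x_j}.
\end{equation*}
Contracting with $\tfrac{1}{2}a_{ij}$ and summing over $i,j$, the first double sum yields $\tfrac{1}{2}\nabla^2_z\psi_k : (\nabla\xi^\top a \ts \nabla\xi)$ (the genuinely second-order, third term), while the second yields $\tfrac{1}{2}(a : H^\xi) \ts \nabla_z\psi_k^\top$ (the second term), the contraction of $a$ with the component Hessians $H^\xi$ producing precisely the noise-induced drift on the reduced variables. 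This is a routine computation; I expect no difficulty here beyond index bookkeeping.

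For the second part, the essential facts are that the embedding $f \mapsto f\circ\xi$ of $L^2_\nu$ into $L^2_\mu$ is an isometry, since $\nu$ is the push-forward of $\mu$ under $\xi$ and hence $\innerprod{\psi_i}{\psi_j}_\nu = \innerprod{\widetilde{\psi}_i}{\widetilde{\psi}_j}_\mu$; that each $\widetilde{\psi}_k$ lies in the range of $\mathcal{P}$, so $\mathcal{P}\widetilde{\psi}_k = \widetilde{\psi}_k$; and that $\mathcal{P}$ is self-adjoint. Combining these,
\begin{equation*}
\innerprod{\mathcal{L}^\xi\psi_i}{\psi_j}_\nu = \innerprod{\mathcal{P}\mathcal{L}\mathcal{P}\widetilde{\psi}_i}{\widetilde{\psi}_j}_\mu = \innerprod{\mathcal{L}\widetilde{\psi}_i}{\mathcal{P}\widetilde{\psi}_j}_\mu = \innerprod{\mathcal{L}\widetilde{\psi}_i}{\widetilde{\psi}_j}_\mu,
\end{equation*}
so the $A$- and $G$-matrices of the $\mathcal{L}^\xi$-Galerkin scheme on $\mathbb{V}$ are literally the matrices to which $\widehat{A}$ and $\widehat{G}$ converge by Proposition~\ref{prop:convergence_gedmd}. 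Since the first part established $\mathrm{d}\widetilde{\psi}_k = \mathcal{L}\widetilde{\psi}_k$, the gEDMD estimator $\widehat{L}^\top = \widehat{A} \ts \widehat{G}^+$ therefore converges to the Galerkin projection of $\mathcal{L}^\xi$ onto $\mathbb{V}$, as claimed.

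The main subtlety---rather than a genuine obstacle---is the identification in the second part: one must recognise that the outer projection $\mathcal{P}$ in $\mathcal{L}^\xi$ is rendered inert by the self-adjointness of $\mathcal{P}$ together with the membership of the test functions $\widetilde{\psi}_j$ in its range, and that the inner $\mathcal{P}$ acts trivially on $\widetilde{\psi}_i$. In effect, projecting $\mathcal{L}$ weakly onto the finite-dimensional $\widetilde{\mathbb{V}}$ and projecting the fully coarse-grained $\mathcal{L}^\xi$ onto $\mathbb{V}$ produce the same bilinear-form matrices precisely because $\widetilde{\mathbb{V}}$ is contained in the range of $\mathcal{P}$. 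The only remaining care concerns the standing regularity and integrability hypotheses---twice continuous differentiability of the $\psi_k$ and of $\xi$, and membership of $\mathcal{L}\widetilde{\psi}_i$ in $L^2_\mu$---which keep the unbounded generator and all inner products well defined on the relevant dense subspace.
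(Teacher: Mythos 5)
Your proof is correct and follows essentially the same route as the paper: verify that $\mathrm{d}\widetilde{\psi}_k = \mathcal{L}\widetilde{\psi}_k$ via the chain rule, establish the identities $\innerprod{\psi_i}{\psi_j}_\nu = \innerprod{\widetilde{\psi}_i}{\widetilde{\psi}_j}_\mu$ and $\innerprod{\smash{\mathcal{L}^\xi\psi_i}}{\psi_j}_\nu = \innerprod{\mathcal{L}\widetilde{\psi}_i}{\widetilde{\psi}_j}_\mu$, and then invoke Proposition~\ref{prop:convergence_gedmd}. The only difference is that you derive those two identities from scratch (push-forward isometry, self-adjointness of $\mathcal{P}$, and $\mathcal{P}\widetilde{\psi}_k=\widetilde{\psi}_k$) and spell out the It\^{o}/chain-rule bookkeeping, whereas the paper simply cites \cite{Zhang:2016aa} for the former and states the latter without computation.
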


\begin{proof}
The expression for $\mathrm{d}\widetilde{\psi}_k(x)$ follows from the chain rule. It was already shown in \cite{Zhang:2016aa} that
\begin{equation*} 
    \begin{split}
        \innerprod{\psi_i}{\psi_j}_\nu &= \innerprod{\psi_i\circ \xi}{\psi_j\circ \xi}_\mu, \\
        \innerprod{\smash{\mathcal{L}^\xi \psi_i}}{\psi_j}_\nu &= \innerprod{\mathcal{L}(\psi_i\circ \xi)}{\psi_j\circ \xi}_\mu.
    \end{split}
\end{equation*}
Thus, Proposition \ref{prop:convergence_gedmd} implies that
\begin{alignat*}{2}
\widehat{A}_{ij} &\rightarrow \innerprod{\mathcal{L}\widetilde{\psi}_i}{\widetilde{\psi}_j}_\mu &&= \innerprod{\mathcal{L}^\xi \psi_i}{\psi_j}_\nu, \\
\widehat{G}_{ij} &\rightarrow \innerprod{\widetilde{\psi}_i}{\widetilde{\psi}_j}_\mu &&= \innerprod{\psi_i}{\psi_j}_\nu. \tag*{\qedhere}
\end{alignat*}
\end{proof}

In summary, data of the original process, sampling the distribution $\mu$, can be used to learn a matrix representation of the coarse-grained generator \eqref{eq:projected_generator}. This matrix approximation can then be used to perform system identification, simulation, and control of the coarse-grained system the same way as described in Section \ref{sec:gEDMD_stochastic}.

\subsubsection{Separate Identification}
For a reversible stochastic differential equation~\eqref{eq:SDE}, we present an alternative approach to identify the parameters of the corresponding coarse-grained system. The method is related to spectral matching as introduced in \cite{NBC19}. The authors of~\cite{Zhang:2016aa} have shown that reversibility of the full process implies the dynamics generated by~\eqref{eq:projected_generator} are also reversible. Recalling that reversible dynamics are characterized by a scalar potential and the diffusion field, the basic idea is simply to estimate these two terms separately. The resulting framework, which we will call \textit{separate identification}, consists of four steps, which are only partially dependent on each other:

\paragraph{Force Matching}
The scalar potential $F^\xi$ of the coarse-grained dynamics can be estimated by an established technique called \textit{force matching} \cite{IZEKOV2005,NOID2008}. It is based on the fact that the gradient of $F^\xi$ solves the following minimization problem \cite{CLV2008}:
\begin{align}
\label{eq:force_matching_problem}
\nabla_z F^\xi &= \mathrm{argmin}_{g\in (L^2_\nu)^p} \int_{\mathbb{R}^d} \|
g(\xi(x)) - f^\xi_{lmf}(x)\|^2\,\mathrm{d}\mu(x), \\
\label{eq:definition_lmf}
f^\xi_{lmf} &= -\nabla_x F \cdot  G^\xi + \nabla_x \cdot G^\xi, \\
G^\xi &= \nabla_x \xi \left[(\nabla_x \xi)^T \nabla_x \xi\right]^{-1},
\end{align}
where the minimization is over all square-integrable vector fields $g$ of the reduced variables~$z$, and the divergence is applied separately to each column of $G^\xi$ in (\ref{eq:definition_lmf}). The vector field $f^\xi_{lmf}$ is called \textit{local mean force}, while $F$ is the scalar potential of the full process.

\paragraph{Application of gEDMD}
The second step consists of applying gEDMD to estimate a finite-dimensional model of the coarse-grained generator $\mathcal{L}^\xi$ as described in Section \ref{sec:cg_gedmd}, using a basis of functions $\{\ts \psi_i \ts\}_{i=1}^n$ defined on the reduced space $\R^p$. In particular, we obtain an estimate of the Galerkin matrix
\begin{align*}
    \widehat{A}_{ij} &= \innerprod{\mathcal{L}^\xi \psi_i}{\psi_j}_\nu.
\end{align*}

\paragraph{Learning the diffusion field}
As already discussed in Remark \ref{rem:gedmd_reversible}, matrix elements of the reduced generator are given by
\begin{equation}
\label{eq:generator_matrix_reversible_2}
\innerprod{\mathcal{L}^\xi \psi_i}{\psi_j}_\nu = -\frac{1}{2} \int \nabla_z \psi_i\, a^\xi \, \nabla_z \psi_j\,\mathrm{d}\nu.
\end{equation}
It follows that the effective diffusion can be learned by matching it to the generator matrix $\widehat{A}$ via \eqref{eq:generator_matrix_reversible_2}. Let $a^\xi(\theta)$ be a parametric model for the effective diffusion. Then the optimal set of parameters can be found by minimizing the Frobenius norm error
\begin{align} \label{eq:minimization_problem_diffusion}
E(\theta) &= \|\hat{A} - A(\theta) \|^2_F, \\
A(\theta)_{ij} &= -\frac{1}{2} \int \nabla_z \psi_i \, a^\xi(\theta) \, \nabla_z \psi_j\,\mathrm{d}\nu.
\end{align}

\paragraph{Determination of the drift}
Using the relationship between drift and diffusion of a reversible system, inserting estimates for $F^\xi$ and $a^\xi$ into \eqref{eq:rev_drift_diffusion_potential} below completes the definition of the reduced model
\begin{equation} \label{eq:rev_drift_diffusion_potential}
    b^\xi = -\frac{1}{2} a^\xi \ts \nabla F^\xi + \frac{1}{2} \nabla \cdot a^\xi.
\end{equation}
The above formulation seems advantageous compared to the direct system identification described in Section \ref{sec:non_deterministic_sys_id} for several reasons:

\begin{itemize}[wide, itemindent=\parindent, itemsep=0ex, topsep=0.5ex]
\item Separate basis sets can be used to calculate the Galerkin matrix, the potential, and the diffusion. Specifically, constraints on each of these (such as positive definiteness of the diffusion) can be incorporated into each basis individually. Moreover, learning of the potential and the diffusion can also be accomplished using nonlinear models.
\item The coordinate functions $z_i$ and $z_i \ts z_j$, as well as the products of the coordinate functions with the effective drift, are no longer required to be contained in the basis set.
\item Both force matching and \eqref{eq:minimization_problem_diffusion} are regression problems, allowing for the use of model validation techniques like cross-validation.
\item The dynamics obtained by combining the learned potential and diffusion via \eqref{eq:rev_drift_diffusion_potential} are automatically reversible.
\item By diagonalizing the generator matrix corresponding to $A(\theta)$ above, the spectrum of the learned dynamics can be calculated directly and compared to the spectrum of the generator matrix corresponding to $A$, providing a further means of model validation.
\end{itemize}

On the other hand, the direct system identification is more general since the reconstruction via the local mean force may fail to yield good approximations of the effective drift in cases where some parts of the dynamics orthogonal to the low-dimensional manifold defined by the reaction coordinate are slow. 

\subsubsection{Example 1: Lemon-slice potential}

We consider overdamped Langevin dynamics (see Remark \ref{rem:overdamped_langevin}) at inverse temperature $\beta = 1$ in the following two-dimensional potential $V$, expressed in polar coordinates:
\begin{equation*}
    V(r, \varphi) = \cos(k \ts \varphi) + \sec(0.5 \ts \varphi) + 10(r-1)^2 + \frac{1}{r}.
\end{equation*}
For $k = 4$, a contour of the potential is shown in Figure~\ref{fig:2d_lemon_slice}(a) below. Because of the two singular terms, the system's state space does not include the set $\{(x_1, x_2):\, x_1 \leq 0,\, x_2 = 0 \}$, enabling us to map the two-dimensional state space to polar coordinates unambiguously.

The polar angle $\varphi$ is a suitable reaction coordinate, so we choose $\xi(x_1, x_2) = \varphi(x_1, x_2)$. Due to the simplicity of the system, all relevant quantities can be calculated analytically. Using the full-state partition function $Z$ and two numerical constants $C_1, C_2$, see \cite{Nueske2019}, the invariant distribution, the effective drift and the effective diffusion along $\varphi$ are given by
\begin{align*}
\nu(\varphi) &= \frac{C_2}{Z} \exp\left(-\left[\cos(k \ts \varphi) + \sec(0.5 \ts \varphi)  \right]\right), \\
b^\varphi(\varphi) &= \frac{C_1}{C_2}\left[k\sin(k \ts \varphi) - 0.5 \tan(0.5 \ts \varphi)\sec(0.5 \ts \varphi)\right], \\
a^\varphi(\varphi) &= \frac{2 \ts C_1}{C_2}.
\end{align*}

We apply coarse-grained gEDMD with a basis set of Legendre polynomials up to degree~20, scaled to fit the domain $[-\pi,\pi]$. In this example and the next, we use exact expressions for the drift and diffusion coefficient, as the parameters of the full system are usually known in the context of model reduction. From the generator matrix, we obtain estimates of the effective drift and diffusion as described in Section \ref{sec:non_deterministic_sys_id}. Moreover, we also apply separate identification to learn the scalar potential and the diffusion. To this end, we use a basis set of periodic Gaussian functions centered at equidistant points between $\varphi = -2.8$ and $\varphi = 2.8$. The bandwidth of these Gaussians is determined by cross-validation, and is found to be $0.1$ for force matching and $2.0$ for the diffusion. We also enforce positivity of the diffusion by applying positivity constraints to the regression problem \eqref{eq:minimization_problem_diffusion}. We see in Figure~\ref{fig:2d_lemon_slice}(b) and \ref{fig:2d_lemon_slice}(c) that both methods provide accurate representations of the effective parameters. However, the diffusion estimated from \eqref{eq:minimization_problem_diffusion} is virtually indistinguishable from the analytical solution, while the representation obtained from the polynomial basis is more oscillatory.

We also verify that gEDMD correctly captures the slow dynamics in this example. We diagonalize the generator matrix obtained from the polynomial basis, and compute the first three implied time scales by taking reciprocals of the first three nontrivial eigenvalues (leaving out the zero eigenvalue). We compare these time scales to those extracted from a Markov state model (MSM) \cite{Prinz2011} inferred directly from the data. We find in Figure~\ref{fig:2d_lemon_slice}(d) that the time scales are in very good agreement. As described above, we also use the generator matrix corresponding to the optimal $ A(\theta) $ to estimate the first three implied time scales, and find them to match almost perfectly as well.

\begin{figure}
    \centering
    \begin{minipage}[t]{0.49\textwidth}
        \centering
        \subfiguretitle{(a)}
        \vspace*{0.6ex}
        \includegraphics[width=0.87\textwidth]{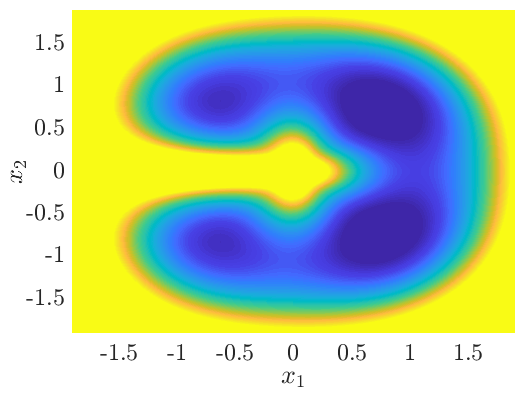}
    \end{minipage}
    \begin{minipage}[t]{0.49\textwidth}
        \centering
        \subfiguretitle{(b)}
        \includegraphics[width=0.9\textwidth]{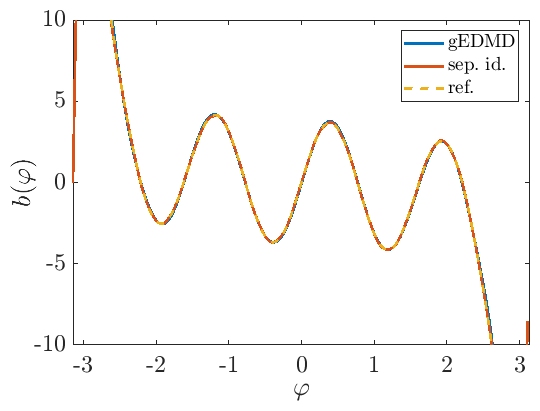}
    \end{minipage} \\[0.5ex]
    \begin{minipage}[t]{0.49\textwidth}
        \centering
        \subfiguretitle{(c)}
        \includegraphics[width=0.9\textwidth]{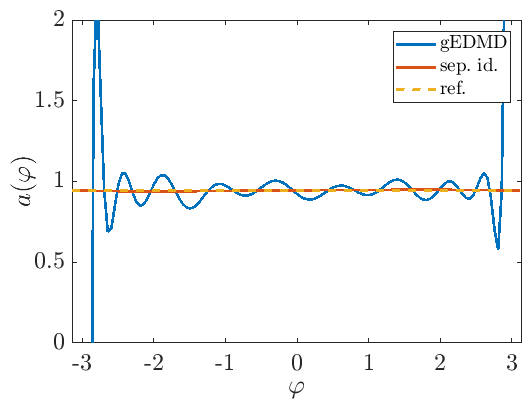}
    \end{minipage}
    \begin{minipage}[t]{0.49\textwidth}
        \centering
        \subfiguretitle{(d)}
        \includegraphics[width=0.9\textwidth]{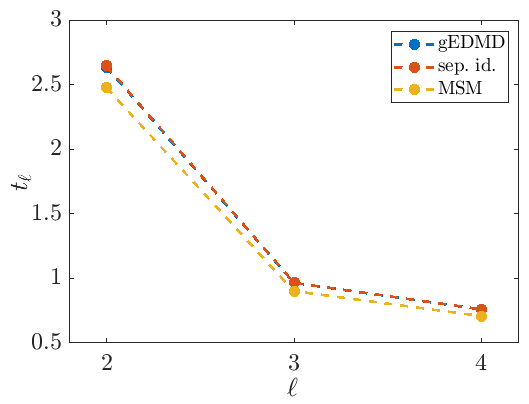}
    \end{minipage}
    \caption{Application of gEDMD with 21 Legendre polynomials to one-dimensional coarse-graining of the two-dimensional lemon-slice potential. (a) Visualization of the potential. (b)~Estimates of the effective drift along the polar angle $\varphi$ obtained directly from the generator matrix (blue), and by combining the solutions of \eqref{eq:minimization_problem_diffusion} and \eqref{eq:force_matching_problem} via \eqref{eq:rev_drift_diffusion_potential} (red). The analytical reference is shown in yellow. (c) Estimates of the effective diffusion along the polar angle $ \varphi $ obtained directly from the generator matrix (blue), and by learning the diffusion via \eqref{eq:minimization_problem_diffusion} using a Gaussian basis set (red), compared to the analytical reference in yellow. (d)~Estimates of the three slowest implied time scales using a Markov state model (yellow), diagonalization of the generator matrix (blue), and diagonalization of the generator matrix corresponding to the optimal diffusion (red).}
    \label{fig:2d_lemon_slice}
\end{figure}

\subsubsection{Example 2: Alanine dipeptide}

As a more complex example, we derive a coarse-grained model from molecular dynamics simulations of alanine dipeptide, which has been used as a test case in numerous previous studies. The data set is the same as in reference \cite{Wang2019} and comprises one million snapshots of Langevin dynamics saved every $1\,\mathrm{ps}$. As is well-known, the positional component of Langevin dynamics behaves approximately like an overdamped process, see Remark \ref{rem:overdamped_langevin}, up to a re-scaling of time. Hence, we apply gEDMD assuming the original process is overdamped, and we extract this effective unit of time by comparing the first two implied time scales obtained from gEDMD and from a reference Markov state model.

The slowest dynamics of alanine dipeptide are captured by a single internal molecular coordinate, called $\phi$-dihedral angle, which we choose to be the coarse-graining coordinate. Figure \ref{fig:ala2}(a) shows the empirical coarse-grained energy $F^\phi$, and an approximation obtained by applying force matching. The basis set for force matching consists of 57 periodic Gaussians of bandwidth 1.2, centered at equidistant points between -2.8 and 2.8. The slowest dynamical process corresponds to the transition across the highest barrier in this energy landscape.

We apply gEDMD with the first 26 Legendre polynomials scaled to fit the domain $[-2.7, 2.7]$. From the generator matrix, we extract the effective drift and diffusion, which are depicted by blue lines in Figures \ref{fig:ala2}(b) and \ref{fig:ala2}(c). As a comparison, we also compute an estimate of the effective diffusion by minimization of \eqref{eq:minimization_problem_diffusion}, including positivity constraints, with a set of 29 Gaussians of bandwidth 0.8, where the optimal bandwidth was determined by cross-validation. The resulting estimate of the diffusion is far less oscillatory than the direct estimate using the generator matrix, while the corresponding drift obtained from~\eqref{eq:rev_drift_diffusion_potential} is similar to the direct estimate.

Finally, we verify that gEDMD accurately reproduces the spectral properties of the original dynamics. As we can see in Figure \ref{fig:ala2}(d), after re-scaling the first two time scales provided by gEDMD by the effective time unit described above, they agree well with the results of an MSM analysis. The same is true for the time scales calculated based on the generator matrix corresponding to the optimal diffusion obtained by solving \eqref{eq:minimization_problem_diffusion}.

\begin{figure}[htb]
    \centering
    \begin{minipage}[t]{0.49\textwidth}
        \centering
        \subfiguretitle{(a)}
        \includegraphics[width=0.9\textwidth]{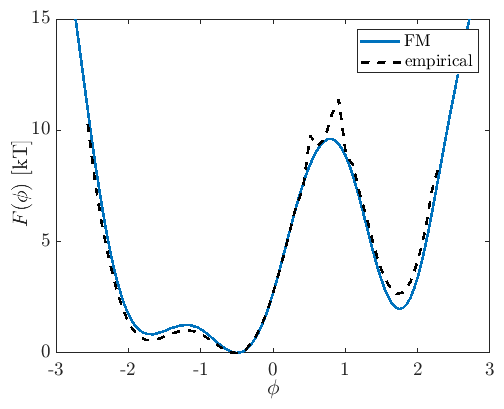}
    \end{minipage}
    \begin{minipage}[t]{0.49\textwidth}
        \centering
        \subfiguretitle{(b)}
        \includegraphics[width=0.9\textwidth]{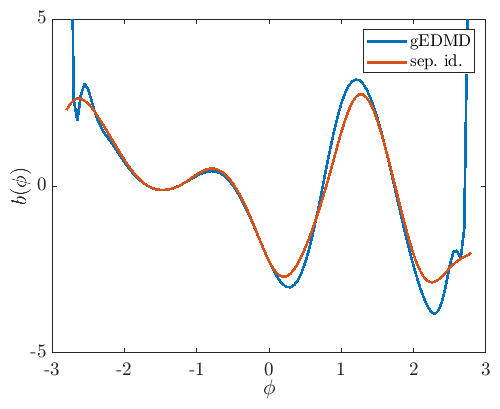}
    \end{minipage} \\[0.5ex]
    \begin{minipage}[t]{0.49\textwidth}
        \centering
        \subfiguretitle{(c)}
        \includegraphics[width=0.9\textwidth]{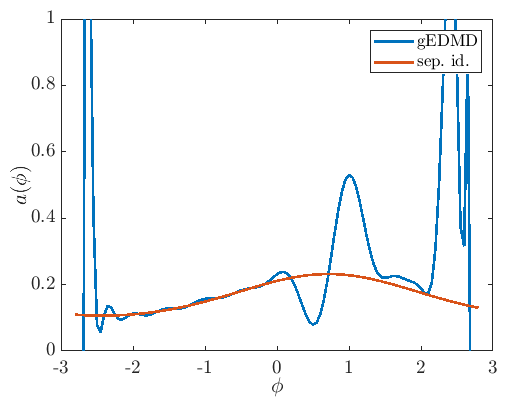}
    \end{minipage}
    \begin{minipage}[t]{0.49\textwidth}
        \centering
        \subfiguretitle{(d)}
        \vspace*{0.5ex}
        \includegraphics[width=0.9\textwidth]{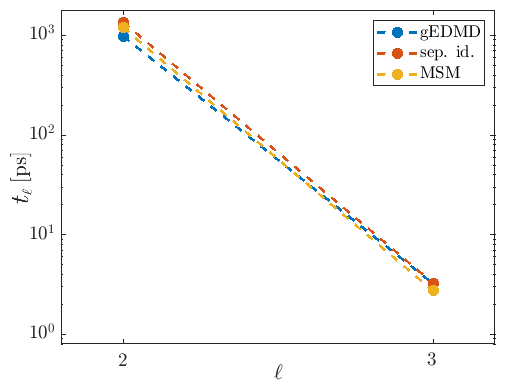}
    \end{minipage}
    \caption{Coarse-grained gEDMD along the $\phi$-angle coordinate of alanine dipeptide, using a basis set of 26 Legendre polynomials  (a) Effective energy $F^\phi$, as estimated by histogramming the molecular dynamics simulation data (black), and by applying force matching with a Gaussian basis set (blue). (b)~Estimates of the effective drift obtained directly from the generator matrix (blue), and by combining the solutions of \eqref{eq:minimization_problem_diffusion} and \eqref{eq:force_matching_problem} via \eqref{eq:rev_drift_diffusion_potential} (red). (c)~Estimates of the effective diffusion obtained directly from the generator matrix (blue), and by learning the diffusion via \eqref{eq:minimization_problem_diffusion} using a Gaussian basis set (red). (d)~Estimates of the two slowest implied time scales using a Markov state model (yellow), diagonalization of the generator matrix (blue), and diagonalization of the generator matrix corresponding to the optimal diffusion (red).}
    \label{fig:ala2}
\end{figure}

\subsection{Control}
\label{subsec:Control}

The predictive capabilities of the Koopman operator have also raised interest in the control community, where the aim is to determine a system input $u$ such that the non-autonomous control system $\dot{x} = b(x,u)$ behaves in a desired way, which results in the following control problem:
\begin{equation} \label{eq:OCP}
    \begin{aligned}
        \min_{u \in L^2([t_0,t_e], \R)} J(x,u) &= \min_{u \in L^2([t_0,t_e], \R)} \int_{t_0}^{t_e} \big\| x(t) - x^{\mathsf{ref}}(t) \big\|_2^2 + \alpha \|u(t)\|_2^2\ \textrm{d}t \\
        \mbox{s.t.}\qquad\qquad \dot{x}(t) &= b(x(t),u(t)), \\
        x(t_0) &= x_0.
    \end{aligned}
\end{equation}
In this formulation, the goal is to track a desired state over the control horizon $[t_0, t_e]$, and $\alpha \in \R^{>0}$ is a small number penalizing the control cost. In order to achieve a feedback behavior, problem \eqref{eq:OCP} is embedded into a model predictive control (MPC) \cite{GP17} scheme, where it has to be solved repeatedly over a relatively short horizon while the system (the plant) is running at the same time. The first part $[t_0, t_0 + h]$ of the optimal control $u$ is then applied to the plant, and \eqref{eq:OCP} has to be solved again on a shifted horizon $[t_0 + h, t_e + h]$.

Since the real-time requirements in MPC are often very hard to satisfy, a promising approach is to replace the system dynamics by a surrogate model, and one possibility is to use the Koopman operator or its generator for prediction. Introducing the variable $z=\psi(f(x))$, we obtain a linear system via the approximation $\mathbf{L}$ of the generator:
\begin{equation*}
    \dot{z}(t) \approx \mathbf{L} z(t).
\end{equation*}
However, as we see above, the Koopman operator is only defined for autonomous systems. Hence, a transformation has to be used (the exception being control-affine systems, where only the autonomous part needs to be modeled). In \cite{PBK15}, the control system was autonomized by introducing an augmented state $\widehat{x} = (x,u)^\top$, and DMD was performed on the augmented system. The same approach was also used in combination with MPC in \cite{KM18a}. This state augmentation significantly increases the data requirements (all combinations of states and control inputs should be covered), such that an alternative transformation was proposed in \cite{PK19,PK18} by restricting $u(t)$ to a finite set of inputs $\{u^1, \ldots, u^{n_c}\}$. This way, the control system can be replaced by a finite set of autonomous systems $b_{u^i}(x) = b(x,u^i)$ for which the corresponding generators $\{L_{u^1},\ldots,L_{u^{n_c}}\}$ can be approximated. The control task is thus to determine the optimal right-hand side in each time step instead of computing a continuous input $u$:
\begin{equation}\label{eq:OCP_Gen}
    \begin{aligned}
        \min_{u \in L^2([t_0,t_e], \{u^1, \ldots, u^{n_c}\})} &\int_{t_0}^{t_e} \big\|z(t) - z^{\mathsf{ref}}(t) \big\|_2^2 + \alpha \|u(t)\|_2^2\ \textrm{d}t \\
        \mbox{s.t.}\qquad\qquad \dot{z}(t) &= \mathbf{L}_{u(t)} z(t), \\
        z(t_0) &= \psi(f(x_0)).
    \end{aligned}
\end{equation}
Note that the quantization (i.e., the switching control) is encoded in the function space the control $u$ lives in. For a more detailed description, the reader is referred to \cite{PK19}.

Regardless of the approach, a drawback of Koopman operator based surrogate models is that the control freedom is limited by the finite lag time. While larger lag times are often beneficial for the approximation of the dynamics, this is counterproductive for control, as the control frequency is strongly limited. This issue is overcome by the generator approach~\eqref{eq:OCP_Gen} since we can choose arbitrary time steps here, and results on mixed integer optimal control problems (see, e.g., \cite{SBD12}) suggest that fast switches allow for solutions of any desired accuracy. Moreover, the continuous-time generator model is much better suited for switching time optimization approaches. Therein, the combinatorial problem of selecting the optimal right-hand side is replaced by a continuous optimization problem for the time instances $\tau_j$ at which the right-hand side is switched from the input $u^i$ to $u^{i+1}$:
\begin{equation}\label{eq:STO_Gen}
	\begin{aligned}
		\min_{\tau \in \R^p} &\int_{t_0}^{t_e} \big\|z(t) - z^{\mathsf{ref}}(t) \big\|_2^2 + \alpha \|u(t)\|_2^2\ \textrm{d}t \\
		\mbox{s.t.}\qquad\qquad \dot{z}(t) &= \mathbf{L}_{u^i} z(t), \qquad \mbox{for}~t\in[\tau_{j-1}, \tau_{j}), \\
		t_0 &= \tau_0 \leq \tau_1 \leq \ldots \leq \tau_p \leq t_e, \\
		i &= 1 + j~\mbox{mod}~n_c, \\
		z(t_0) &= \psi(f(x_0)).
	\end{aligned}
\end{equation}
By fixing the number $p$ of switches, this reformulation of \eqref{eq:OCP_Gen} is now a continuous, finite-dimensional optimization problem for the switching times with a given switching sequence (cf.~\cite{EWD03} for details), and both open and closed-loop control schemes (using MPC in the latter case) can be constructed.

Problem~\eqref{eq:STO_Gen} was also used in combination with the Koopman operator in \cite{PK19}, but the discrete-time system prohibits arbitrary switching points which results in a reduced performance. Using the generator solves this problem and additionally, there even exist efficient second-order methods for this problem class \cite{SOBG17}.

In what follows, we present examples for the two extensions for MPC based on the Koopman generator. For the deterministic case, we use the 1D viscous Burgers equation with varying lag times in an MPC framework (Problem~\eqref{eq:OCP_Gen}) and for the non-deterministic case, we control the expected value of an Ornstein--Uhlenbeck process using both MPC (Problem~\eqref{eq:OCP_Gen}) and open loop switching time control (Problem~\eqref{eq:STO_Gen}).

\subsubsection{Partial differential equations}

\begin{figure}
    \centering
    \begin{minipage}{0.49\textwidth}
        \centering
        \subfiguretitle{(a)}
        \includegraphics[width=0.85\textwidth]{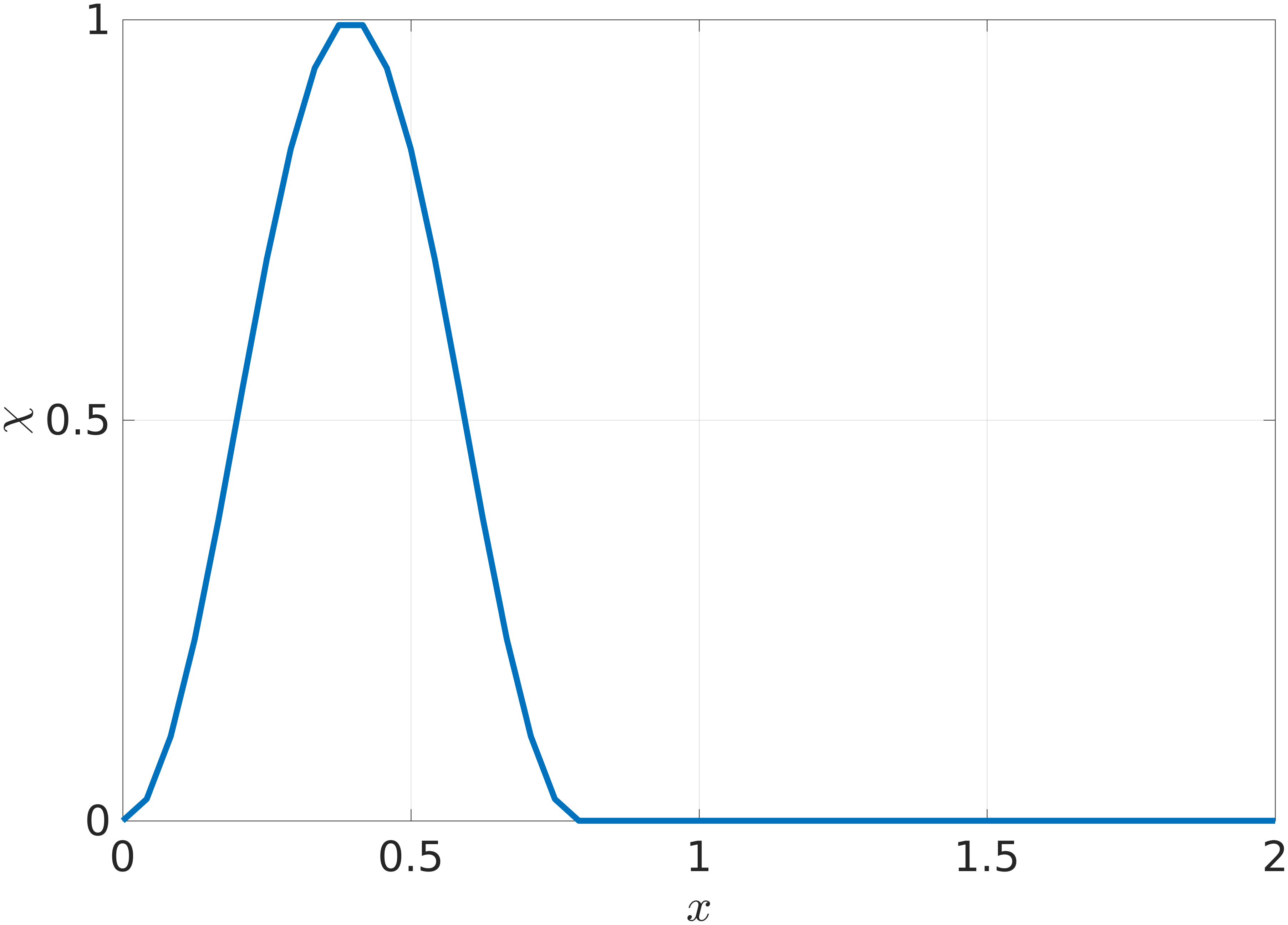}
    \end{minipage}
    \begin{minipage}{0.49\textwidth}
        \centering
        \subfiguretitle{(b)}
        \includegraphics[width=0.9\textwidth]{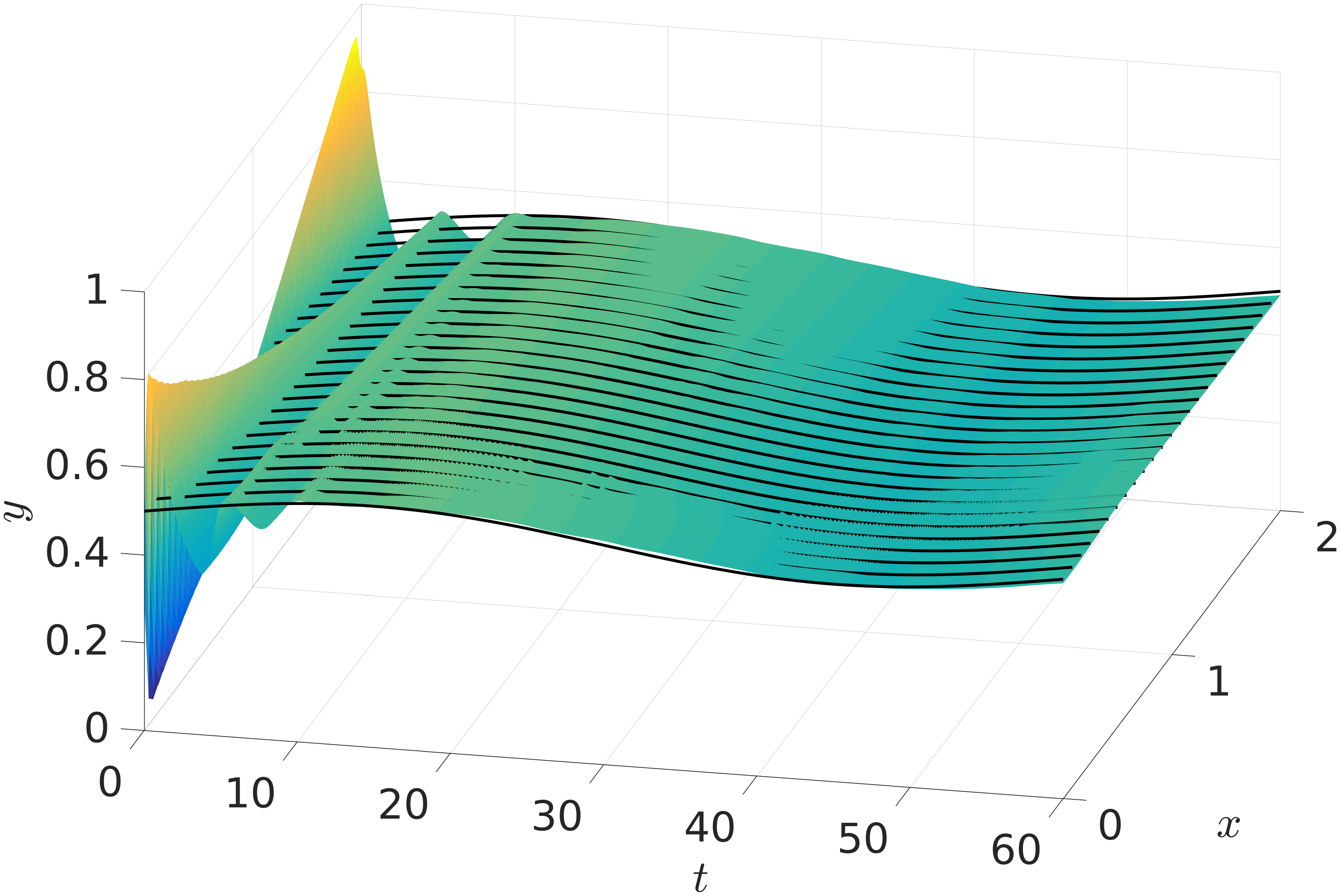}
    \end{minipage}
    \\[1ex]
    \begin{minipage}[t]{0.49\textwidth}
        \centering
        \subfiguretitle{(c)}
        \includegraphics[width=\textwidth]{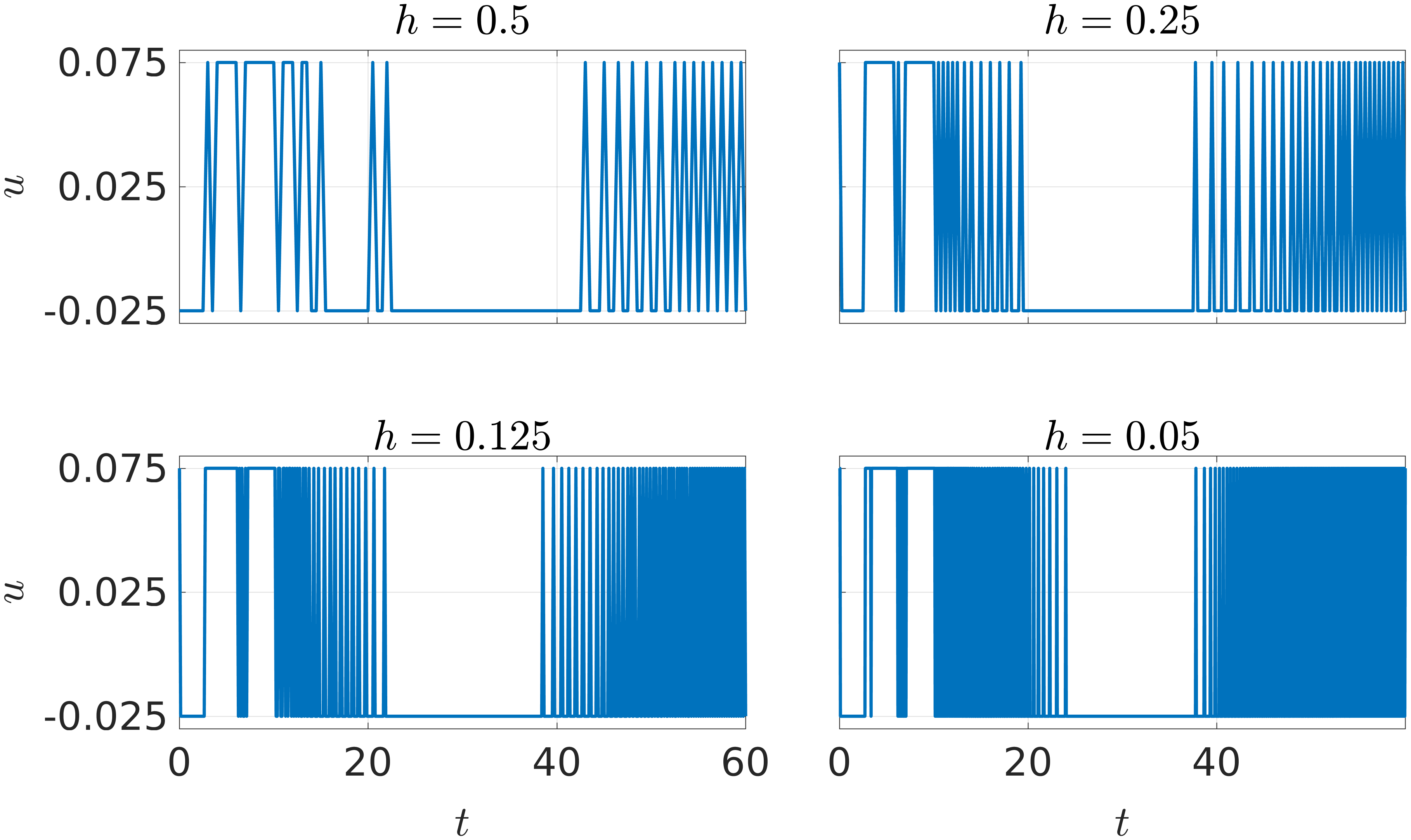}
    \end{minipage}
    \begin{minipage}[t]{0.49\textwidth}
        \centering
        \subfiguretitle{(d)}
        \includegraphics[width=0.85\textwidth]{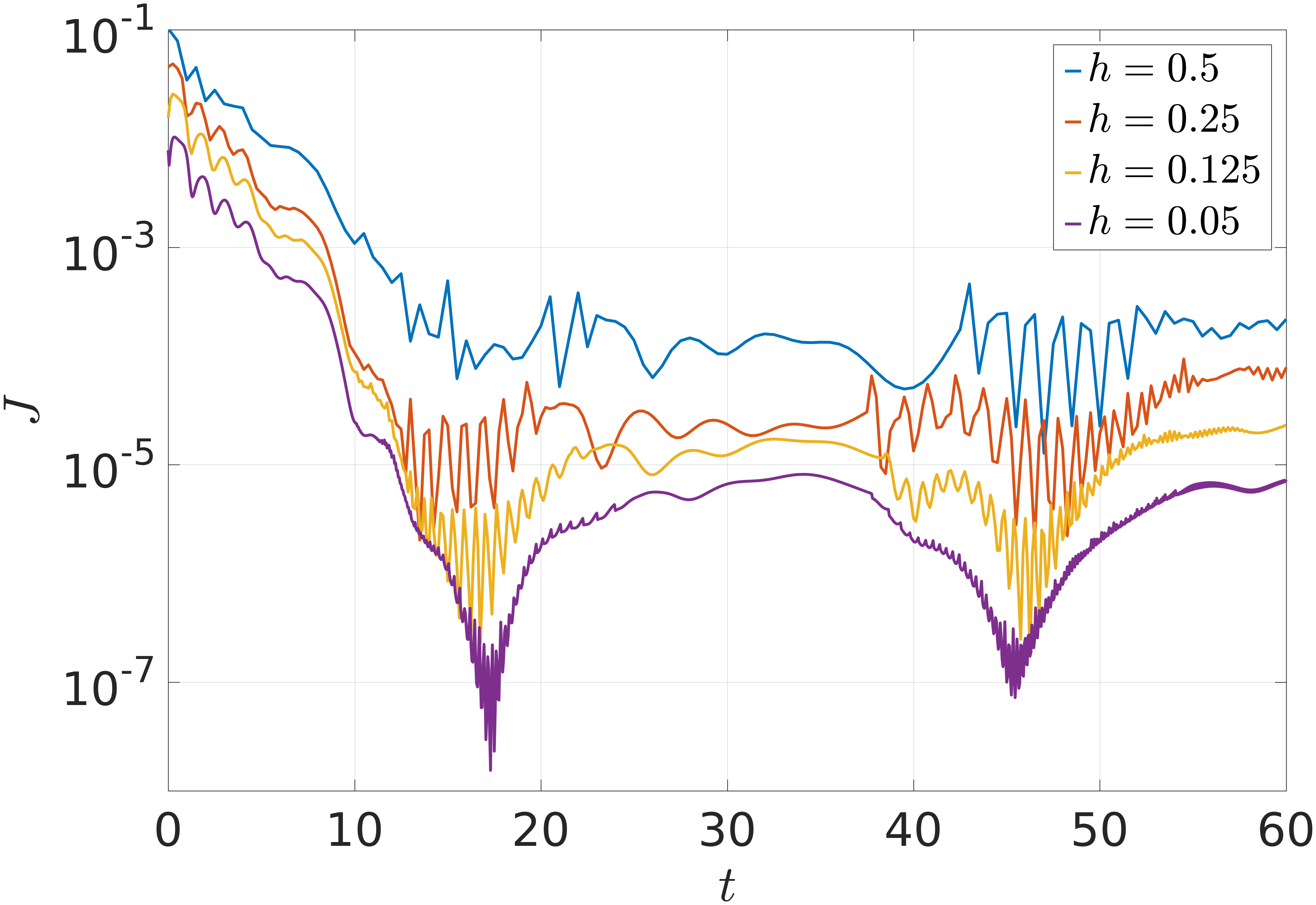}
    \end{minipage}
    \caption{Control of the Burgers equation using the Koopman generator and switching control. (a) The shape function used for the distributed control term. (b) The optimal state (colored) and the reference trajectories (black) for $h = 0.05$. (c) The optimal switching sequence as a function of the time step $h$. (d) The tracking error as a function of the time step $h$. }
    \label{fig:Burgers_control}
\end{figure}

Consider the 1D Burgers equation with distributed control
\begin{align*}
\dot{y}(t,x) - \nu \Delta y(t,x) + y(t,x) \nabla y(t,x) &= u(t) \chi(x).
\end{align*}
Here, $y$ denotes the state depending on space $x$ and time $t$, and the system is controlled by a shape function $\chi$ (see Figure~\ref{fig:Burgers_control} (a)) that can be scaled by the input $u\in\{-0.025, 0.075\}$. The objective is to track a sinusoidal reference trajectory (shown in black in (b)), and we do this by solving problem \eqref{eq:OCP_Gen} in an MPC framework. To this end, we approximate the Koopman generator using a relatively coarse ``full state observable'' (a grid of 25 equidistantly distributed points in space) and monomials up to order two. 
The data is collected from one trajectory with a piecewise constant input signal $u(t) \in \{ -0.025, 0.075\}$. It is then divided into two data sets corresponding to the constant inputs $0.025$ and $-0.075$, respectively. The time derivative $\dot{y}$ is computed via finite differences.

We see in (c) and (d) that with decreasing time steps $h$ (over which the input $u$ is constant) that the control performance increases significantly. While the time step $h=0.5$ corresponds to a solution that can be obtained by a Koopman operator approximation as well, the generator framework allows us to decrease the time steps and thereby the error by two orders of magnitude. 
Note that we can formally also decrease the lag time for the Koopman operator to increase the performance. In our experiments, the results were of comparable quality, and this is likely due to the high robustness of the MPC algorithm, which can cope well with small model inaccuracies. However, an advantage of the generator approach is that we can choose the time step adaptively---in contrast to the Koopman operator approach, where a change in the lag time requires a different data set and new computations. This can be beneficial in terms of computational efficiency and is thus particularly important for long control horizons (see, e.g., \cite{KWBS12}), due to which real-time capability may otherwise be jeopardized.

\subsubsection{Stochastic differential equations}

In the case of non-deterministic systems, the generator approach allows for a very elegant solution of \emph{stochastic control problems}. In stochastic (or \emph{robust}) control (see \cite{BM07,Mes16} for introductions), the goal is very often to steer the expected value to some desired value. In many situations, determining this expected value (e.g., via Monte Carlo methods) is numerically challenging. As the Koopman generator for stochastic systems describes the evolution of the expected value, see~\eqref{eq:stochastic_koopman_op}, problem \eqref{eq:OCP_Gen} can be used to solve a control problem for the expected value using a deterministic linear system. To this end, we replace the computation of the initial value by an average over the recent past: \[z_0 = \frac{1}{h} \int_{t_0-h}^{t_0} z(t) \ts \textrm{d}t.\]

We again consider the Ornstein--Uhlenbeck process from Example~\ref{ex:Ornstein Uhlenbeck}, with the only difference that we now add a control input:
\begin{equation*}
    \mathrm{d}X_t = -\alpha \ts (X_t - u) \ts \mathrm{d}t + \sqrt{2 \beta^{-1}} \ts \mathrm{d}W_t,
\end{equation*}
with $\alpha = 1$ and $\beta = 2$.
We compute two generator approximations corresponding to $u=-5$ and $u=5$ using monomials up to order 12. Figure~\ref{fig:OU_control}(a) shows the trajectories of the two systems and the predictions using the corresponding generators, and we see that the expected values are accurately predicted. We set $h = 0.05$ as a discretization for the control $u$ as well as the length of the input that is applied to the plant in each loop. The MPC controller based on~\eqref{eq:OCP_Gen} with the modified initial condition $z_0$ yields very good performance, as is shown for a tracking problem with a piecewise constant reference value in Figure~\ref{fig:OU_control}(b). The corresponding optimal control is shown in Figure~\ref{fig:OU_control}(c), and Figure~\ref{fig:OU_control}(d) shows that continuously varying inputs can be approximated equally well. 

Finally, we use the switching time reformulation \eqref{eq:STO_Gen} in an open loop fashion in order to track a $\tanh$ profile over 20 seconds. The results are shown in Figures~\ref{fig:OU_control}(e) and \ref{fig:OU_control}(f), where the optimal input with $p = 200$ switches is shown in (e) and the corresponding dynamics are shown in (f). We observe a remarkable performance, as the optimal trajectory of the generator model and the expected value of the controlled Ornstein--Uhlenbeck process (computed as the mean over 1000 simulations) are almost indistinguishable.

To summarize, the generator approach yields highly efficient control schemes both for open and closed-loop control, as the linear system for the prediction of the expected value requires no further sampling of multiple noisy trajectories.

\begin{figure}[h!]
    \centering
    \begin{minipage}[t]{0.49\textwidth}
        \centering
        \subfiguretitle{(a)}
        \vspace*{0.5ex}
        \includegraphics[width=0.9\textwidth]{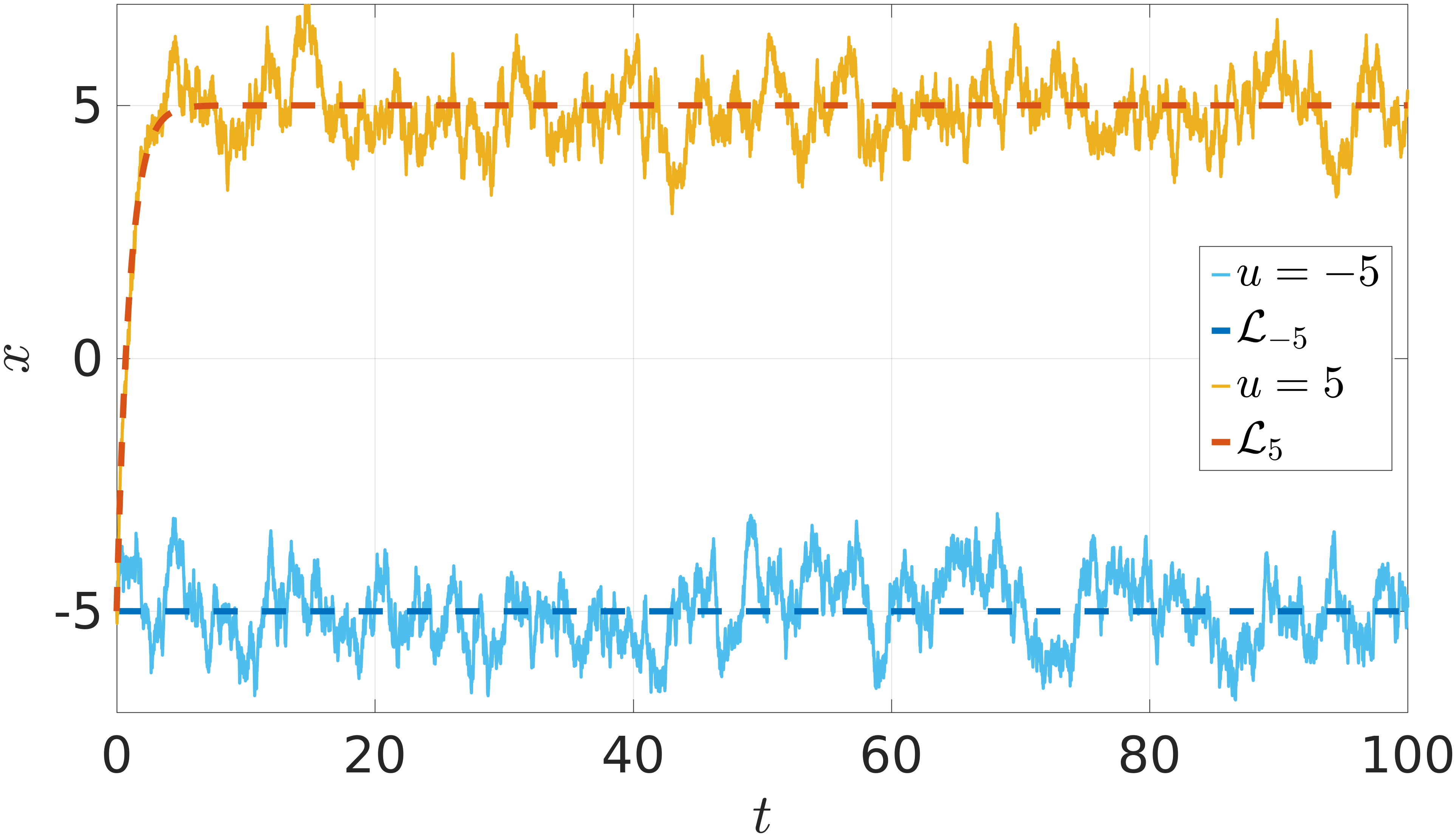}
    \end{minipage}
    \begin{minipage}[t]{0.49\textwidth}
        \centering
        \subfiguretitle{(b)}
        \includegraphics[width=0.9\textwidth]{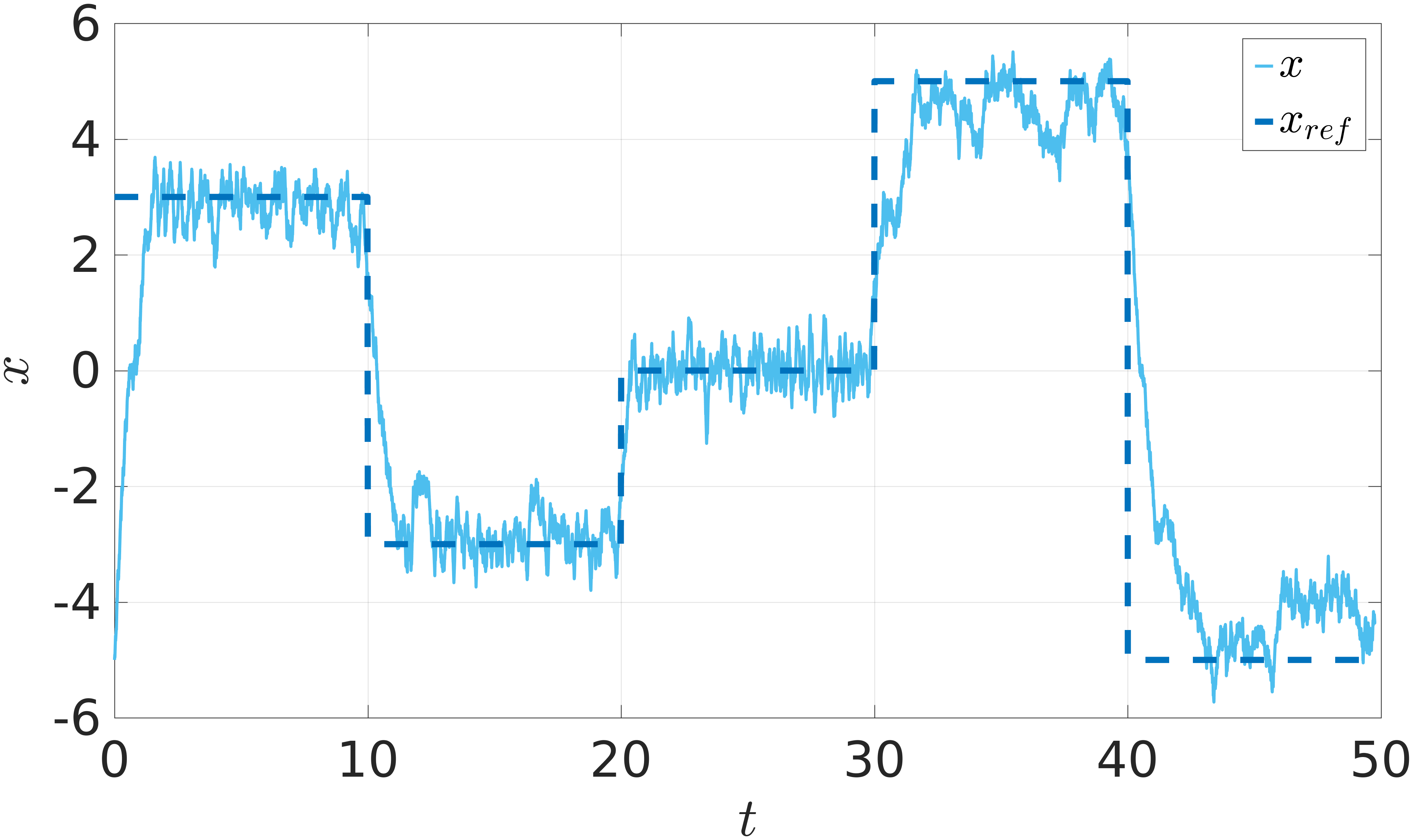}
    \end{minipage}
    \\[1ex]
    \begin{minipage}[t]{0.49\textwidth}
        \centering
        \subfiguretitle{(c)}
        \vspace*{0.3ex}
        \includegraphics[width=0.9\textwidth]{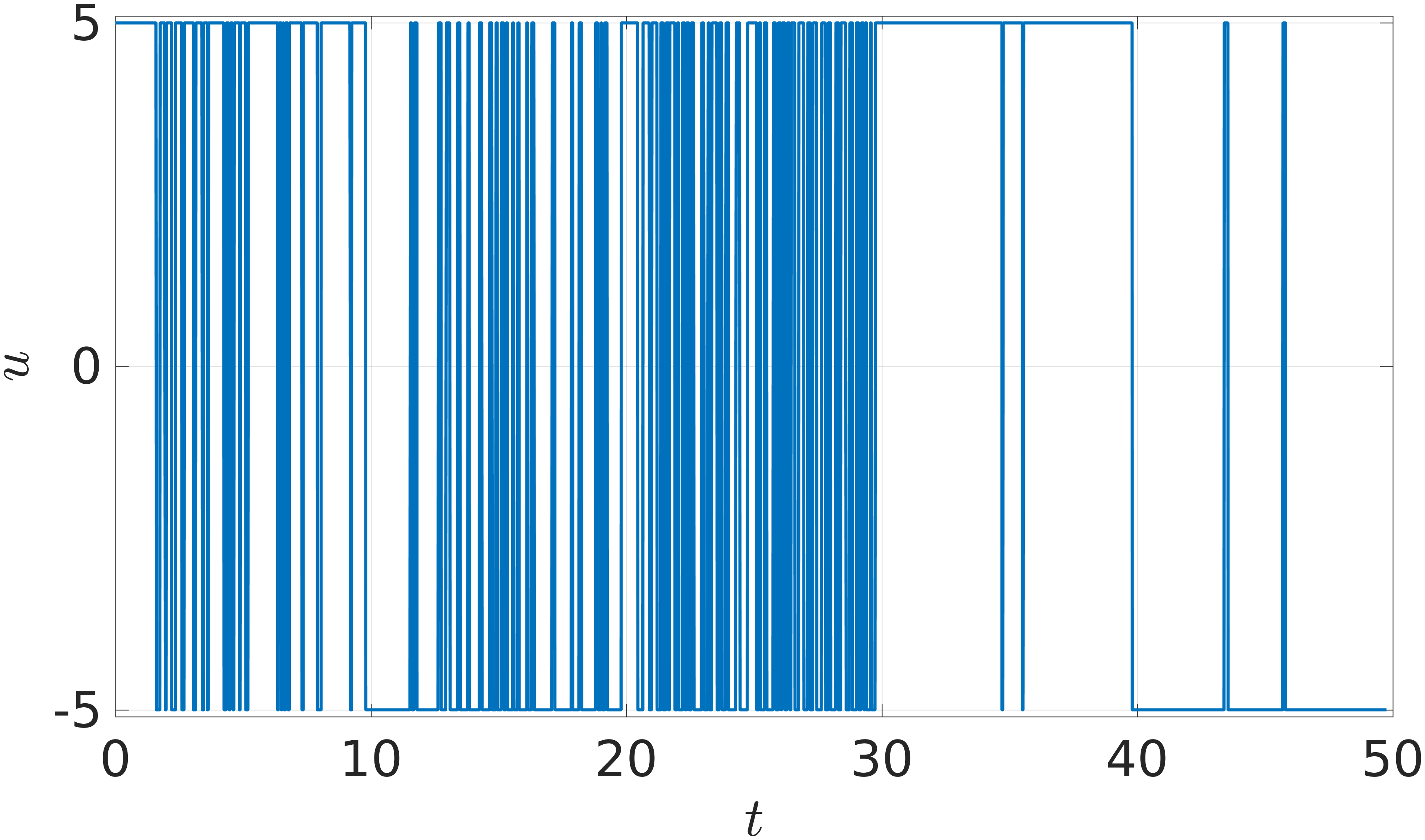}
    \end{minipage}
    \begin{minipage}[t]{0.49\textwidth}
        \centering
        \subfiguretitle{(d)}
        \vspace*{0.5ex}
        \includegraphics[width=0.9\textwidth]{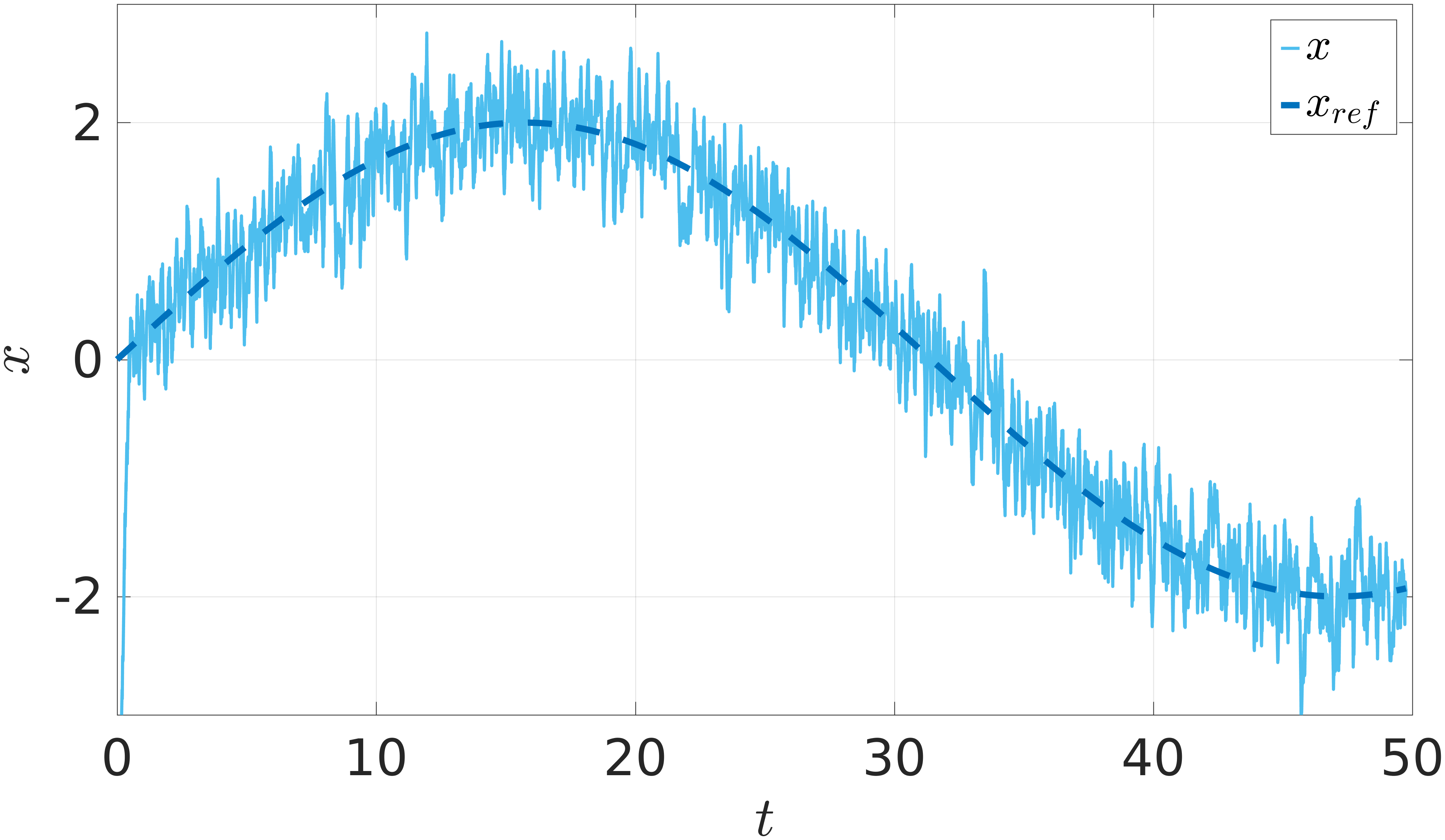}
    \end{minipage}
	\\[1ex]
	\begin{minipage}[t]{0.49\textwidth}
		\centering
		\subfiguretitle{(e)}
		\vspace*{0.3ex}
		\includegraphics[width=0.9\textwidth]{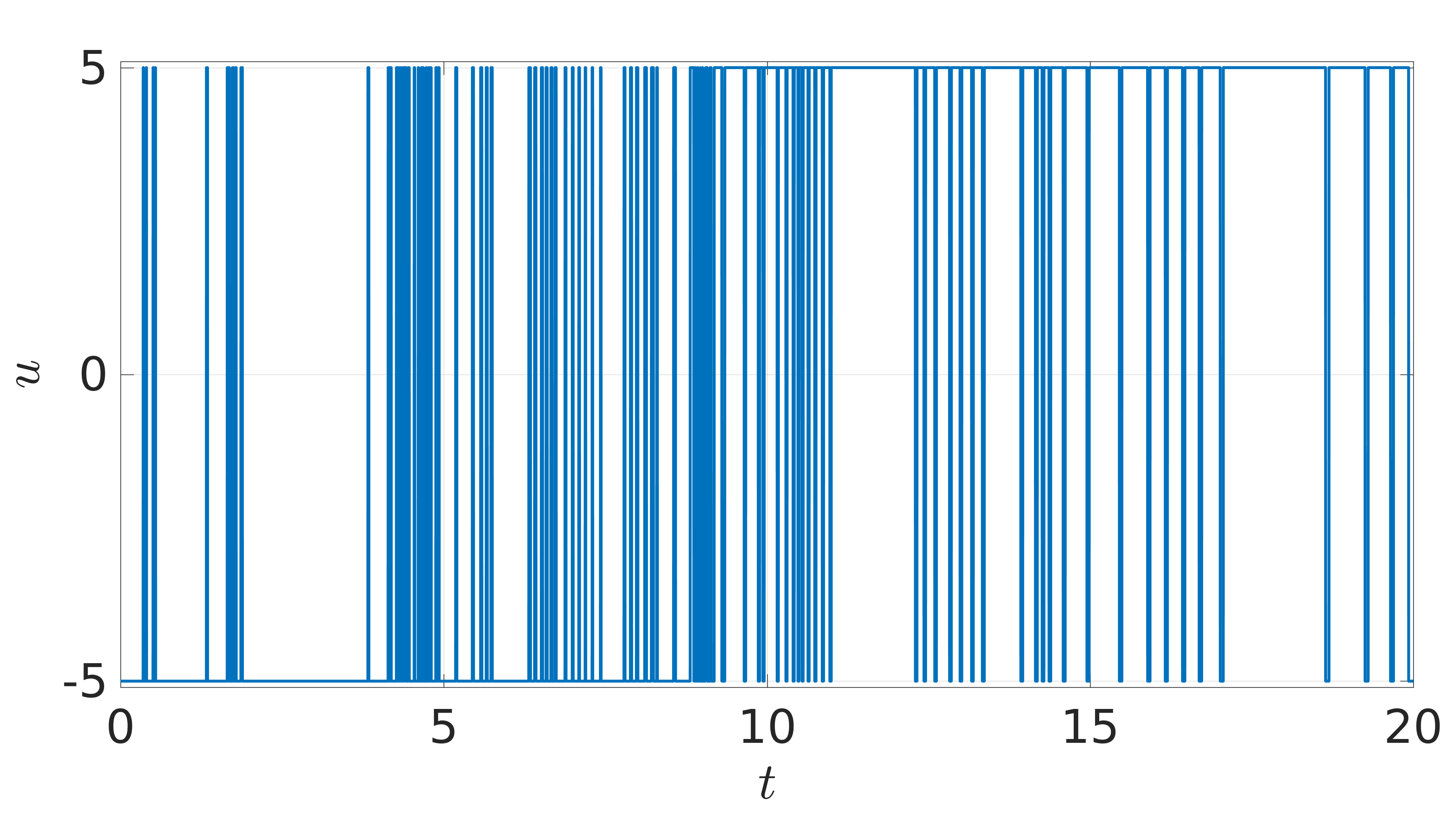}
	\end{minipage}
	\begin{minipage}[t]{0.49\textwidth}
		\centering
		\subfiguretitle{(f)}
		\vspace*{0.5ex}
		\includegraphics[width=0.9\textwidth]{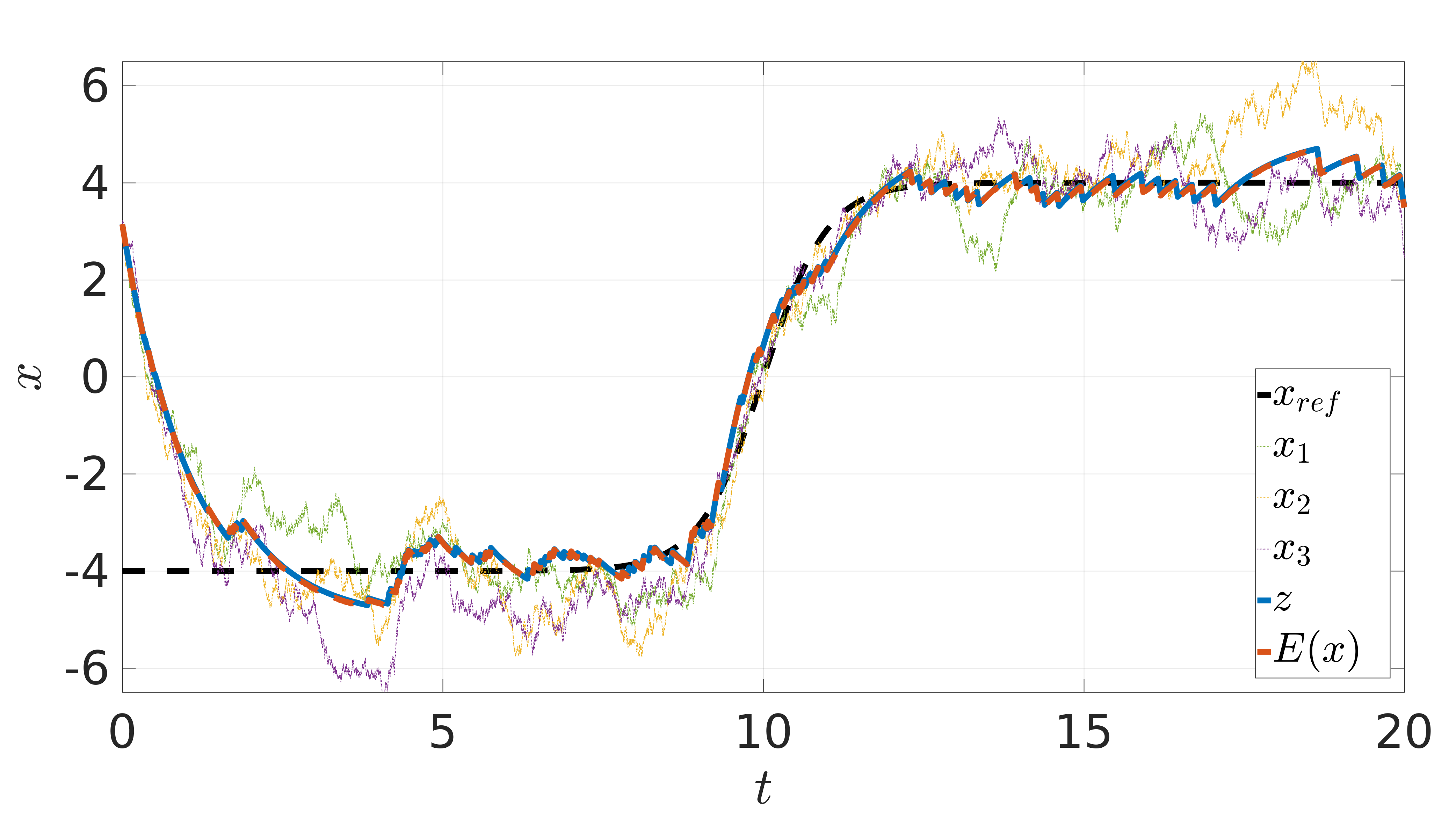}
	\end{minipage}
    \caption{Control of the expected value of the Ornstein--Uhlenbeck process. (a) Simulation and generator prediction for $u=5$ and $u=-5$, respectively. (b) Tracking of a piecewise constant reference trajectory using MPC based on Problem~\eqref{eq:OCP_Gen}. (c) The corresponding optimal input signal. (d) MPC based tracking of a continuous reference trajectory. (e)~Solution of Problem \eqref{eq:STO_Gen} with $p=200$ for the reference trajectory $x_{ref} = \mbox{tanh}(t-10)$, see (f). (f) The corresponding optimal trajectories of the generator model ($z$, blue line), of three realizations of the Ornstein--Uhlenbeck process ($x_1$ to $x_3$, dotted lines), and of the expected value of the controlled process ($E(x)$, dashed orange line).}
    \label{fig:OU_control}
\end{figure}

\section{Conclusion}
\label{sec:Conclusion}

We presented an extension of standard EDMD to approximate the generator of the Koopman or Perron--Frobenius operator from data and highlighted several important applications pertaining to model reduction, system identification, and control. We illustrated that this approach can be used to obtain a decomposition into eigenvalues, eigenfunctions, and modes and, furthermore, that SINDy emerges as a special case. The proposed methods were implemented in Python, the gEDMD code and some of the above examples are available at \url{https://github.com/sklus/d3s/}.

Open questions include the convergence of gEDMD if not only the number of data points but also the number of basis functions tends to infinity. It is also unclear which part of the spectrum is approximated if the generator does not possess a pure point spectrum. Furthermore, is it possible to learn coarse-grained dynamics by only considering the dominant terms of the decomposition of the system's equations into eigenvalues, eigenfunctions, and modes (cf.\ Example~\ref{ex:systemidentification} and also \cite{NBC19})? Another interesting application of gEDMD would be to compute committor functions or hitting times. Extensions to non-autonomous systems will be considered in future work.

\section*{Acknowledgements}

S.~K., J.~N., and C.~S were funded by Deutsche Forschungsgemeinschaft (DFG) through grant CRC 1114 (Scaling Cascades in Complex Systems, project ID: 235221301) and through Germany's Excellence Strategy (MATH\texttt{+}: The Berlin Mathematics Research Center, EXC-2046/1, project ID: 390685689). F.~N.\ was partially funded by the Rice University Academy of Fellows. F.~N.\ and C.~C.\ were supported by the National Science Foundation (CHE-1265929, CHE-1738990, CHE-1900374, PHY-1427654) and the Welch Foundation (C-1570). C.~C.\ also acknowledges funding from the Einstein Foundation Berlin. S.~P.\ acknowledges support by the DFG Priority Programme 1962.

{\small{}\bibliographystyle{unsrturl}
\bibliography{gEDMD}
}{\small\par}

\end{document}